\documentclass[a4paper,12pt]{article}

\usepackage{amsfonts}
\usepackage{amscd,color}
\usepackage{amsmath,amsfonts,amssymb,amscd}
\numberwithin{equation}{section}
\usepackage{indentfirst,graphicx,epsfig}
\usepackage{graphicx}
\input{epsf}
\usepackage{graphicx}
\usepackage{epstopdf}
\usepackage{caption}
\usepackage{subfigure}
\usepackage{mathrsfs}
\usepackage{ifpdf}
\usepackage{enumerate}
\usepackage{array}
\allowdisplaybreaks
\setlength{\textwidth}{162mm}
\setlength{\textheight}{230mm}
\setlength{\headheight}{2cm}
\setlength{\topmargin}{0pt}
\setlength{\headsep}{0pt}
\setlength{\oddsidemargin}{0pt}
\setlength{\evensidemargin}{0pt}

\parskip=3pt

\voffset -25mm \rm

\newtheorem{thm}{Theorem}[section]

\newtheorem{problem}[thm]{Problem}

\newtheorem{lem}[thm]{Lemma}

\newenvironment {proof} {\noindent{\textbf {Proof.}}}{\hfill$\Box$}
\newcommand{\ml}{l\kern-0.55mm\char39\kern-0.3mm}

\baselineskip=20pt

\title{\textbf{The maximum spectral radius of $\theta_{2,2,3}$-free graphs with given size}}
\author{{\small Jing Gao, Xueliang Li} \\
{\small  Center for Combinatorics and LPMC}\\
{\small Nankai University, Tianjin 300071, China}\\
{\small gjing1270@163.com, lxl@nankai.edu.cn} \\
}
\date{}
\begin{document}
\maketitle
\begin{abstract}
A theta graph $\theta_{r,p,q}$ is the graph obtained by connecting two distinct vertices with three internally disjoint paths of length $r,p,q$, where $q\geq p\geq r\geq1$ and $p\geq2$. A graph is $\theta_{r,p,q}$-free if it does not contain $\theta_{r,p,q}$ as a subgraph. The maximum spectral radius of $\theta_{1,p,q}$-free graphs with given size has been determined for any $q\geq p\geq2$. Zhai, Lin and Shu [Spectral extrema of graphs with fixed size: cycles and complete bipartite graphs, European J. Combin. 95 (2021) 103322] characterized the extremal graph with the maximum spectral radius of $\theta_{2,2,2}$-free graphs having $m$ edges. In this paper, we consider the maximum spectral radius of $\theta_{2,2,3}$-free graphs with size $m$ and characterize the extremal graph.
\\[2mm]
\textbf{Keywords:} Spectral radius; $F$-free graphs; Theta graphs; Extremal graph; Brualdi-Hoffman-Tur\'{a}n problem\\
\textbf{AMS subject classification 2020:} 05C35, 05C50.\\
\end{abstract}

\section{\bf Introduction}
Let $G$ be an undirected simple graph with vertex set $V(G)$ and edge set $E(G)$, where $n:=|G|=|V(G)|$ and $m:=e(G)=|E(G)|$ are the order and the size of $G$, respectively. The adjacency matrix of a connected graph $G$ is defined as $A(G)=(a_{u,v})_{n\times n}$ where $a_{u,v}=1$ if $uv\in E(G)$ and $a_{u,v}=0$ otherwise. The spectral radius $\lambda(G)$ of $G$ is the largest eigenvalue of $A(G)$. Given a graph $F$, a graph $G$ is said to be $F$-free if it does not contain $F$ as a subgraph. Let $\mathcal{G}(m, F)$ denote the set of $F$-free graphs with $m$ edges and without isolated vertices. The Brualdi-Hoffman-Tur\'{a}n type problem \cite{Brualdi-Hoffman-1985} is to determine the maximum spectral radius of $F$-free graphs with given size.  This problem has attracted wide attention recently, see \cite{Li-Zhao-2024,Li-Zhai-2024,Nikiforov-2002,Nikiforov-2006,Nikiforov,Yu-Li-2024,Zhai-Lin-2021}.

A theta graph, say $\theta_{r,p,q}$, is the graph obtained by connecting two distinct vertices with three internally disjoint paths of length $r,p,q$, where $q\geq p\geq r\geq1$ and $p\geq2$. About $\theta_{r,p,q}$-free graphs, the Brualdi-Hoffman-Tur\'{a}n type problem has been determined completely for $r=1$. First, Sun et al. \cite{Sun-Li-2023} confirmed the graphs having the largest spectral radius among all $\theta_{1,2,3}$-free and $\theta_{1,2,4}$-free graphs with odd size, respectively. Fang and You \cite{Fang-You-2023} characterized the extremal graph with maximum spectral radius of $\theta_{1,2,3}$-free graphs with even size. Liu and Wang \cite{Liu-Wang-2024} characterized the extremal graph with maximum spectral radius of $\theta_{1,2,4}$-free graphs with even size. Later, Lu et al. \cite{Lu-Lu-2024} characterized the extremal graph with the largest spectral radius of $\theta_{1,2,5}$-free graphs. For $q\geq5$, Li et al. \cite{Li-Zhai-2024} determined the largest spectral radius of $\theta_{1,2,q}$-free graphs. Recently, Gao and Li \cite{Gao-Li-2024} gave the largest spectral radius of $\theta_{1,3,3}$-free graphs. For $q\geq p\geq3$ and $p+q\geq7$, Li et al. \cite{Li-Zhao-2024} obtained the largest spectral radius of $\theta_{1,p,q}$-free graphs. In the same paper, Li et al. \cite{Li-Zhao-2024} proposed a problem about $\theta_{r,p,q}$-free graphs where $q\geq p\geq r\geq2$.
\begin{problem}\cite{Li-Zhao-2024}
How can we characterize the graphs among $\mathcal{G}(m,\theta_{r,p,q})$ having the largest spectral radius for $q\geq p\geq r\geq2$?
\end{problem}
For $r=p=q=2$, we have $\theta_{2,2,2}\cong K_{2,3}$. Zhai et al. \cite{Zhai-Lin-2021} determined the extremal graph for $K_{2,r}$-free graphs with $r\geq3$.
\begin{thm}\cite{Zhai-Lin-2021}
If $G\in\mathcal{G}(m,K_{2,r+1})$ with $r\geq2$ and $m\geq16r^2$, then $\lambda(G)\leq\sqrt{m}$, and equality holds if and only if $G$ is a star.
\end{thm}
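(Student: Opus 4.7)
The plan is to work with the Perron eigenvector $x$ of $G$, normalized so that $x_{u^*} = \max_v x_v = 1$ at some vertex $u^*$. Write $d := d(u^*)$, $N_1 := N(u^*)$, $N_2 := V(G) \setminus (N_1 \cup \{u^*\})$, and $\beta := \sum_{v \in N_2} x_v$. The $K_{2,r+1}$-free hypothesis enters only through the clean local bound $|N(u^*) \cap N(w)| = d_{N_1}(w) \leq r$ for every $w \neq u^*$.

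First I would rewrite $\lambda^2$ by swapping the order of summation:
\[
\lambda^2 \;=\; \sum_{v \in N_1}\sum_{w \sim v} x_w \;=\; \sum_{w \in V} x_w\,|N(u^*) \cap N(w)|,
\]
where the $w = u^*$ term contributes exactly $d$. Two useful estimates then follow. Bounding each remaining summand by $x_w \leq 1$, together with the edge decomposition $m = d + e(G[N_1]) + e(N_1, N_2) + e(G[N_2])$, gives
\[
\lambda^2 \;\leq\; m + e(G[N_1]) - e(G[N_2]);
\]
using instead the common-neighbor bound $d_{N_1}(w) \leq r$ and $\sum_{w \in N_1} x_w = \lambda$ gives
\[
\lambda^2 \;\leq\; d + r\lambda + r\beta.
\]
A parallel expansion starting from $N_2$ produces $\lambda\beta = \sum_w x_w\,|N(w) \cap N_2| \leq e(N_1, N_2) + 2 e(G[N_2]) \leq 2(m - d)$, and substituting $\beta \leq 2(m-d)/\lambda$ into the second estimate yields the cubic
\[
\lambda^3 - r\lambda^2 - d\lambda - 2r(m - d) \;\leq\; 0.
\]

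The bound $\lambda^2 \leq m$ is then extracted by case analysis. When $e(G[N_1]) \leq e(G[N_2])$ the first estimate immediately delivers it. In the opposite regime I would argue by contradiction: assuming $\lambda^2 > m$ together with $\lambda \leq d$ forces $d > \sqrt{m}$, while the hypothesis $m \geq 16r^2$ gives $\lambda \geq 4r$ and hence $\lambda - r \geq 3\lambda/4$; substituting into the cubic and splitting into subcases on the size of $d$, while combining with the reserve $e(G[N_1]) > e(G[N_2])$ from the first estimate, should close the argument. For the equality case, tightness throughout the chain forces $N_2 = \emptyset$, $e(G[N_1]) = 0$, and a careful analysis of the Perron eigenvector then identifies $G$ with the star $K_{1,m}$.

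The main obstacle I anticipate is the case analysis in the complementary regime, where neither of the two estimates is sharp enough by itself. The quantitative role of the hypothesis $m \geq 16r^2$ is essentially to rule out the small near-extremal complete bipartite graphs $K_{a,b}$ with $\min(a,b) \in \{2, \ldots, r\}$, which also attain $\lambda = \sqrt{m}$ whenever $m = ab \leq r^2$; calibrating the two estimates so that they close just above this threshold is the most delicate step.
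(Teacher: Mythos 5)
You should first note that the paper does not prove this statement at all: it is Theorem 1.2, imported verbatim from Zhai--Lin--Shu \cite{Zhai-Lin-2021}, so there is no in-paper proof to match against. Judged on its own terms, your proposal starts correctly (the identity $\lambda^2 x_{u^\ast}=d\,x_{u^\ast}+\sum_{w\neq u^\ast}x_w\,|N(u^\ast)\cap N(w)|$, the common-neighbour bound $|N(u^\ast)\cap N(w)|\le r$ from $K_{2,r+1}$-freeness, the inequality $\lambda\le d(u^\ast)$, and both estimates $\lambda^2\le m+e(G[N_1])-e(G[N_2])$ and $\lambda^2(\lambda-r)\le d\lambda+2r(m-d)$ are all valid), but the step you describe as ``should close the argument'' is a genuine gap, and in fact cannot be closed with only these two inequalities. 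Concretely, let $G_0$ be $K_2\vee rK_1$ with $m-2r-1$ pendant edges attached at one dominating vertex $a$; this graph is $K_{2,r+1}$-free for $r\ge 2$, and with $u^\ast=a$ one gets $d=m-r$, $e(G[N_1])=r$, $N_2=\emptyset$. Your first estimate then only yields $\lambda^2\le m+r$, while the cubic at $\lambda=\sqrt m$ reads $-rm+r\sqrt m-2r^2\le 0$ and so holds with slack of order $rm$; both constraints remain satisfied for every $\lambda\le\sqrt{m+r}$. Hence no case split on $d$, combined with the ``reserve'' $e(G[N_1])>e(G[N_2])$, can extract $\lambda\le\sqrt m$ from this pair of inequalities: they simply do not distinguish a star from a star with a slightly denser core, which is exactly the family the theorem must exclude.

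What is missing is a refined bound on the $N_1$-contribution: rather than estimating $\sum_{u\in N_1}d_{N_1}(u)x_u$ crudely by $2e(G[N_1])x_{u^\ast}$ or by $r\lambda x_{u^\ast}$, one has to show that vertices of $N_1$ with positive degree inside $G[N_1]$ have eigenvector entries smaller than $x_{u^\ast}$ by a quantified margin. The standard way to do this --- and the engine of the actual Zhai--Lin--Shu proof, mirrored in the present paper's treatment of $\theta_{2,2,3}$ --- is to classify the components $H$ of $G[N_1]$ (which under $K_{2,r+1}$-freeness have maximum degree at most $r$) and prove for each one an inequality of the shape $\sum_{u\in V(H)}(d_H(u)-1)x_u< (e(H)-c_H)\,x_{u^\ast}$ with enough room to absorb the $N_2$-terms, then sum over components and contradict the global identity. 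Your remarks on the role of $m\ge 16r^2$ (ruling out the complete bipartite graphs $K_{a,b}$ with $2\le a\le b\le r$, which also attain $\sqrt m$) and your sketch of the equality case are sound in spirit, but they sit downstream of this missing component-by-component analysis, so the proposal as written does not constitute a proof.
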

In this paper, we give an upper bound of the spectral radius of $\theta_{2,2,3}$-free graphs and characterize the unique graph with the maximum spectral radius among $\mathcal{G}(m, \theta_{2,2,3})$.

\begin{thm}\label{main theorem}
Let $G\in\mathcal{G}(m,\theta_{2,2,3})$ with $m\geq57$. Then $\lambda(G)\leq\frac{1+\sqrt{4m-3}}{2}$ and equality holds if and only if $G\cong K_2\vee\frac{m-1}{2}K_1$.
\end{thm}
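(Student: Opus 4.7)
Let $G$ be an extremal graph in $\mathcal{G}(m,\theta_{2,2,3})$ and set $\lambda:=\lambda(G)$. A direct check shows $K_2\vee\frac{m-1}{2}K_1$ is $\theta_{2,2,3}$-free (no pair admits three internally disjoint paths of lengths $2,2,3$, since its only paths of length $\ge 3$ would have to leave the book), so for $m$ odd we have $\lambda\ge\frac{1+\sqrt{4m-3}}{2}$, i.e.\ $\lambda^{2}-\lambda\ge m-1$. I need to prove the reverse inequality in general. A standard argument---attaching pendant edges to the component of largest $\lambda$ preserves $\theta_{2,2,3}$-freeness (since theta graphs are $2$-connected) and strictly increases $\lambda$---lets us assume $G$ is connected. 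Let $\mathbf{x}$ be the Perron eigenvector, fix $u^*\in V(G)$ with $x_{u^*}=\max_v x_v=1$, and write $A:=N(u^*)$ and $B:=V(G)\setminus N[u^*]$.

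Expanding $\lambda^{2}x_{u^*}=\sum_{v\sim u^*}\sum_{w\sim v}x_w$ yields the walk-counting identity
\[
\lambda^{2}\;=\;d(u^*)\;+\;\sum_{vw\in E(A)}(x_v+x_w)\;+\sum_{\substack{v\in A,\,w\in B\\ vw\in E(G)}} x_w,
\]
which I aim to bound by $\lambda+m-1$. The crude estimate $x_v\le 1$ on each summand only yields $\lambda^{2}\le m+e(A)$, which is far too weak: in the book $e(A)=\frac{m-1}{2}$, whereas the true $\lambda^{2}$ is $\Theta(m)$. The gap is closed by the sharper eigenvector bound $x_v\le d(v)/\lambda$ (from $\lambda x_v=\sum_{w\sim v} x_w\le d(v)$), which is tight exactly for the degree-$2$ pages of the book, where $x_v=2/\lambda$. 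Splitting $A$ according to whether $d(v)<\lambda$ (``page-like'') or $d(v)\ge\lambda$ (``hub-like'') and applying the refined bound to the page-like vertices yields a bound of the desired shape $\lambda^{2}-\lambda\le m-1$, with equality only when every page has degree exactly $2$ and every page attaches to a single second hub.

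The $\theta_{2,2,3}$-free hypothesis enters through the reformulation: if $|N(a)\cap N(b)|\ge 2$, then every $a$--$b$ path of length $3$ must use a common neighbour as an internal vertex (otherwise, picking any two common neighbours together with the length-$3$ path exhibits a $\theta_{2,2,3}$). Taking $a=u^*$ and $b\in B$ with $c(u^*,b)\ge 2$ forbids $b$ from having any neighbour in $B$ and sharply restricts its neighbours in $A$, ultimately forcing $e(B)=0$ and a near-matching bipartite structure between $A$ and $B$; taking $a=u^*$ and $b\in A$ with $c(u^*,b)\ge 2$ rules out $P_4$-configurations inside $G[A]$, reducing $G[A]$ to a star centred at a unique second hub $v^*\in A$. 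Combining these structural facts with the estimate of the previous paragraph gives $\lambda^{2}-\lambda\le m-1$, and tracing through the equality cases of all the inequalities forces $x_{v^*}=1$, $N(v^*)=N[u^*]\setminus\{v^*\}$, and $d(w)=2$ for every $w\in A\setminus\{v^*\}$, i.e.\ $G\cong K_2\vee\frac{m-1}{2}K_1$.

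The hardest step will be carrying out the case analysis in the third paragraph: each would-be structural violator must be converted into an actual $\theta_{2,2,3}$-subgraph on six distinct vertices and seven edges, coordinating the Perron estimates with the combinatorial configuration. The hypothesis $m\ge 57$ ensures that $\lambda$ is large enough for the split of $A$ by degree relative to $\lambda$ to behave as the book dictates, thereby precluding small-$m$ pathologies in which, for instance, a high-degree neighbour of $u^*$ masquerades as a page.
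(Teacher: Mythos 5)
Your skeleton coincides with the paper's: an extremal graph, the Perron vector normalized at an extremal vertex $u^*$, the partition $U=N(u^*)$, $W=V\setminus N[u^*]$, and the comparison of the walk identity for $\lambda^2 x_{u^*}$ with $\lambda^2-\lambda\ge m-1$. But the heart of the proof is missing, and the shortcut you propose for it cannot work. You claim that the common-neighbourhood reformulation of $\theta_{2,2,3}$-freeness ``rules out $P_4$-configurations inside $G[A]$, reducing $G[A]$ to a star centred at a unique second hub.'' This is false as a combinatorial statement: a path $u_1u_2u_3u_4$ inside $N(u^*)$ creates no $\theta_{2,2,3}$ (the length-$3$ path $u^*u_1u_2u_3$ shares the internal vertex $u_2$ with the length-$2$ path $u^*u_2u_3$, and there is no third vertex to route around), and likewise $C_3$, $C_l$, $K_4$, $K_4-e$, $K_{1,3}+e$, and any disjoint union of stars inside $N(u^*)$ are all $\theta_{2,2,3}$-free together with $u^*$. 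What the hypothesis gives combinatorially is only that $G[U]$ contains no double star $S_{1,2}$, which confines each nontrivial component of $G[U]$ to the list $K_{1,r}$, $K_{1,3}+e$, $K_4-e$, $K_4$, $P_k$, $C_l$. Each entry other than a single star must then be excluded by a separate \emph{spectral} estimate --- solving the local eigenvalue equations on that component and showing its contribution to the key inequality falls short by at least $1$, using $\lambda\ge 8$ --- and the remaining case of no nontrivial component (i.e.\ $G$ triangle-free) must be killed by Nosal's bound $\lambda\le\sqrt m<\frac{1+\sqrt{4m-3}}{2}$, a case your sketch never addresses even though without it your ``unique second hub $v^*$'' need not exist.

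The same over-optimism affects $e(B)=0$: for $b\in B$ with $d_A(b)=1$ your common-neighbourhood observation says nothing, and the paper instead first squeezes $e(W)\le 1$ out of the counting inequality and then eliminates $e(W)=1$ with an explicit eigenvector computation showing $\lambda\le 3$. So while your plan points in the right direction and shares the paper's framework, the ``hardest step'' you defer is essentially the entire proof, and the specific mechanism you propose for it (converting every structural violator into an explicit $\theta_{2,2,3}$ on six vertices) provably cannot succeed for most of the configurations that must be excluded.
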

\section{Preliminaries}
At the beginning of this section, we give some notations and terminology. Readers are referred to \cite{Bondy-Murty-2008} and \cite{Brouwer-Haemers-2012}. For any vertex $v\in V(G)$, we denote by $N(v)$ or $N_G(v)$ the neighborhood set of $v$ in $G$ and $N[v]=N(v)\cup\{v\}$. Let $d(v)$ or $d_G(v)$ be the degree of a vertex $v$ in $G$. For any two subsets $X,Y\subseteq V(G)$, we denote $N_X(Y)=\cup_{v\in Y}N(v)\cap X$. Let $e(X,Y)$ denote the number of all edges of $G$ with one end vertex in $X$ and the other in $Y$. Particularly, let $e(X):=e(X,X)$. Denote by $G[X]$ the subgraph of $G$ induced by $X$. Given two disjoint graphs $G$ and $H$, denote $G\cup H=(V(G)\cup V(H), E(G)\cup E(H))$. let $G\vee H$ be the graph obtained from $G\cup H$ by joining each vertex of $G$ to each vertex of $H$. As usual, let $P_n$, $C_n$, $K_{1,n-1}$ and $K_n$ be the path, the cycle, the star and the complete graph on $n$ vertices, respectively. Let $K_{1,n-1}+e$ be the graph obtained from $K_{1,n-1}$ by adding one edge within its independent set and $K_n-e$ be a graph obtained from $K_n$ by deleting any one edge.

For a matrix (or vector) $A$, $A > 0 (\geq 0)$ means that all its entries are positive (nonnegative). Here, we state the famous Perron-Frobenius theorem.
\begin{lem}[Perron-Frobenius Theorem]\cite{Brouwer-Haemers-2012}\label{PF}
Let $A\geq0$ be an irreducible symmetric matrix. Then the largest eigenvalue $\lambda(A)$ of $A$ is a real number, and the entries of eigenvector corresponding to $\lambda(A)$ are all positive.
\end{lem}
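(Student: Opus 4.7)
Let $x$ be the Perron eigenvector of $A(G)$ (positive by Lemma~\ref{PF} once we restrict to a component of maximum spectral radius) and write $\lambda = \lambda(G)$. Choose $u^* \in V(G)$ with $x_{u^*} = \max_v x_v$, normalized to $x_{u^*} = 1$. Denote $N = N(u^*)$, $M = V(G) \setminus N[u^*]$, $d = d(u^*)$, $e_N = e(G[N])$, $e_{NM} = e(N,M)$, $e_M = e(G[M])$, so $m = d + e_N + e_{NM} + e_M$. Counting 2-walks out of $u^*$ weighted by $x$ gives
\begin{equation*}
\lambda^{2} \;=\; d \;+\; \sum_{v\in N} d_{G[N]}(v)\, x_v \;+\; \sum_{w\in M} |N(u^*)\cap N(w)|\, x_w,
\end{equation*}
and combining with $\lambda = \sum_{v\in N} x_v$ yields the master identity
\begin{equation*}
\lambda^{2} - \lambda \;=\; (d - \lambda) \;+\; \sum_{v\in N} d_{G[N]}(v)\, x_v \;+\; \sum_{w\in M} |N(u^*)\cap N(w)|\, x_w.
\end{equation*}
The bound $\lambda \le \tfrac{1+\sqrt{4m-3}}{2}$ is equivalent to $\lambda^{2}-\lambda \le m-1$, so it suffices to bound the right-hand side of the master identity by $(\lambda-1) + e_N + e_{NM} + e_M$.

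\textbf{Structural consequences of $\theta_{2,2,3}$-freeness.} The plan is to convert the forbidden subgraph into two structural claims. First, $G[N]$ must be very close to a star: if $v_1v_2$ and $v_3v_4$ are vertex-disjoint edges inside $N$, then the two length-$2$ paths $u^*\text{--}v_1$ and $u^*\text{--}v_3$ from $u^*$ combined with the length-$3$ path $u^*\text{--}v_1\text{--}v_2\text{--}\cdots$ built from the second edge can be arranged to produce a $\theta_{2,2,3}$ on an appropriate pair of endpoints; so $G[N]$ is a star about some $v^*\in N$. Second, $e_M$ is essentially zero: for any edge $w_1w_2\in E(G[M])$ such that $w_2$ has two common neighbors with $u^*$ in $N$, the paths $u^*\text{--}c_1\text{--}w_2$, $u^*\text{--}c_2\text{--}w_2$, and $u^*\text{--}z\text{--}w_1\text{--}w_2$ for any $z\in N(w_1)\cap N\setminus\{c_1,c_2\}$ form a $\theta_{2,2,3}$; handling the residual case $|N(u^*)\cap N(w_i)|\le 1$ uses the counting identity $\sum_{w\in M}|N(u^*)\cap N(w)| = e_{NM}$ together with the Perron equation at $v^*$.

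\textbf{Closing the inequality and the equality case.} With $G[N]$ a star at $v^*$ and $e_M=0$, the first sum in the master identity collapses to $\sum_{v\in N\cap N(v^*)} x_v + d_{G[N]}(v^*) x_{v^*}$, and the eigenvalue equation $\lambda x_{v^*} = 1 + \sum_{v\in N(v^*)\setminus\{u^*\}} x_v$ can be substituted to reexpress this in terms of $\lambda$, $x_{v^*}$, and edges incident to $v^*$. Bounding the remaining terms by $x\le 1$ and using $d\le m$ reduces everything to a scalar inequality which, for $m\ge 57$, simplifies to $\lambda^{2}-\lambda \le m-1$. For equality, $d-\lambda = \sum_{v\in N}(1-x_v) = 0$ forces $x_v = 1$ for every $v\in N$, making each $v\in N$ Perron-equivalent to $u^*$. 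Combined with the structural claims, the star $G[N]$ must collapse to a single edge $u^{**}v^*$ and, by symmetry of $u^*$ and $v^*$, every $w\in M$ is adjacent to exactly $\{u^*,v^*\}$, giving $G\cong K_2\vee \tfrac{m-1}{2}K_1$.

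\textbf{Main obstacle.} The principal difficulty is that the forbidden subgraph $\theta_{2,2,3}$ is a hybrid of a $C_4$-type structure and a length-$3$ path, so its exclusion does not translate into a single well-studied local property (such as $C_4$-freeness or bounded book number). Extracting clean simultaneous upper bounds on $e_N$, $e_{NM}$, and in particular the near-vanishing of $e_M$, requires a case split on whether vertices of $M$ have at most one or at least two common neighbors with $u^*$; the second regime is easy but the first must be handled by Cauchy–Schwarz-type arguments together with second-order Perron estimates at $v^*$. The hypothesis $m\ge 57$ is the usual signal that the final bookkeeping inequality is tight only asymptotically, and that sharp constants are the most tedious step of the proof.
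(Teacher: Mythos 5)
Your proposal does not address the statement it is supposed to prove. The statement is the Perron--Frobenius theorem itself: that an irreducible nonnegative symmetric matrix $A$ has a real largest eigenvalue whose associated eigenvector can be taken entrywise positive. What you have written instead is an outline of an argument for the paper's main result, Theorem~\ref{main theorem}, on the spectral radius of $\theta_{2,2,3}$-free graphs of given size. Nothing in your text engages with irreducibility, with why the spectral radius is attained by a real eigenvalue, or with why the corresponding eigenvector is positive; indeed your very first sentence invokes ``positive by Lemma~\ref{PF},'' i.e.\ you assume the statement you were asked to prove, which would be circular even if the rest were relevant.

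For the record, the paper offers no proof of this lemma either: it is quoted directly from the reference [Brouwer--Haemers, \emph{Spectra of Graphs}] as a classical fact, so there is nothing in the paper to compare your argument against. If you did intend to prove it, the standard route is to observe that for a symmetric matrix $\lambda(A)=\max_{\|x\|=1}x^{\top}Ax$ is real, that replacing a maximizer $x$ by $|x|$ (entrywise) does not decrease the Rayleigh quotient when $A\geq0$, so a nonnegative maximizing eigenvector exists, and that irreducibility then forces every entry of this eigenvector to be strictly positive (a zero entry would propagate through the equation $Ax=\lambda x$ to a nontrivial invariant coordinate subset, contradicting irreducibility). Your structural analysis of $G[N]$, $e(U,W)$ and the $m\ge 57$ bookkeeping belongs to the proof of Theorem~\ref{main theorem}, not here.
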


Note that $A(G)$ is irreducible and nonnegative for a connected graph $G$. By Lemma \ref{PF}, there exists a unique positive unit eigenvector $\mathbf{x}$ corresponding to $\lambda(G)$, which is called Perron vector of $G$. Let $\mathbf{x}$ be the Perron vector of $G$ with coordinate $x_v$ corresponding to the vertex $v\in V(G)$. A vertex $u^\ast$ is said to be an extremal vertex if $x_{u^\ast}=\max\{x_u|u\in V(G)\}$.

A cut vertex of a graph is a vertex whose deletion increases the number of components. A graph is called 2-connected, if it is a connected graph without cut vertices.

\begin{lem}\cite{Zhai-Lin-2021}\label{lem:Zhai-Lin-2021}
Let $G$ be a graph in $\mathcal{G}(m,F)$ with the maximum spectral radius. If $F$ is a 2-connected graph and $u^\ast$ is an extremal vertex of $G$, then $G$ is connected and $d(u)\geq2$ for any $u\in V(G)\setminus N[u^\ast]$.
\end{lem}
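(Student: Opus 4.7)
The plan is to handle the two conclusions by a pair of standard edge-shift arguments that push edges toward the extremal vertex $u^\ast$, exploiting the $2$-connectedness of $F$ (which forces $\delta(F)\ge 2$) to preserve $F$-freeness after every modification.

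For connectedness, suppose for contradiction that $G$ is disconnected. Because $u^\ast$ maximizes the Perron coordinates, it sits in a component $H$ with $\lambda(H)=\lambda(G)$; let $B$ denote the union of the other components, which contains at least one edge since $\mathcal{G}(m,F)$ forbids isolated vertices. Delete $V(B)$ and attach $e(B)$ brand-new pendant vertices at $u^\ast$ to form a graph $G'$ with size $m$ and no isolated vertex. Every new pendant has degree $1$, so no copy of $F$ can use such a vertex; hence any $F$-subgraph of $G'$ lies inside $H\subseteq G$, contradicting $G\in\mathcal{G}(m,F)$. But $G'$ strictly contains $H$ as a proper connected subgraph, so by Perron--Frobenius $\lambda(G')>\lambda(H)=\lambda(G)$, contradicting the maximality of $\lambda(G)$.

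For the minimum-degree statement, now assume $G$ is connected and suppose some vertex $u\in V(G)\setminus N[u^\ast]$ has $d(u)=1$, with sole neighbor $v\ne u^\ast$. Let $G':=G-uv+uu^\ast$. Then $u$ still has degree $1$ in $G'$, so any copy of $F$ in $G'$ must avoid $u$ and therefore lies in $G'-u=G-u\subseteq G$, a contradiction. Connectedness of $G$ forces $d_G(v)\ge 2$, so $G'$ has no isolated vertex and thus $G'\in\mathcal{G}(m,F)$. Writing $\mathbf{x}$ for the Perron vector of $G$, Rayleigh's inequality yields
\[
\lambda(G')\;\ge\;\frac{\mathbf{x}^{T}A(G')\mathbf{x}}{\mathbf{x}^{T}\mathbf{x}}\;=\;\lambda(G)+\frac{2x_u(x_{u^\ast}-x_v)}{\mathbf{x}^{T}\mathbf{x}}\;\ge\;\lambda(G),
\]
using $x_{u^\ast}\ge x_v$.

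The main subtlety lies in the equality case: maximality of $\lambda(G)$ forces equality throughout, so $x_v=x_{u^\ast}$ and $\mathbf{x}$ attains the Rayleigh quotient maximum for $A(G')$; hence $\mathbf{x}$ is itself a top eigenvector of $A(G')$ with eigenvalue $\lambda(G')=\lambda(G)$. Evaluating $A(G')\mathbf{x}=\lambda(G')\mathbf{x}$ at the coordinate $u^\ast$ then gives
\[
\lambda(G)\,x_{u^\ast}+x_u\;=\;\lambda(G')\,x_{u^\ast}\;=\;\lambda(G)\,x_{u^\ast},
\]
so $x_u=0$, contradicting $\mathbf{x}>0$ on the connected graph $G$. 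The hard part is exactly this step of promoting the Rayleigh inequality, which only delivers $\lambda(G')\ge\lambda(G)$, to the contradiction $x_u=0$ via the eigenequation at $u^\ast$; every other step is routine edge-count and $F$-free bookkeeping that leans on $\delta(F)\ge 2$.
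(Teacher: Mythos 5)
This lemma is quoted from \cite{Zhai-Lin-2021} and the paper gives no proof of its own, so there is nothing internal to compare against; your argument is correct and is essentially the standard one behind the cited result: shift edges toward $u^\ast$ (pendant edges at $u^\ast$ to repair disconnectedness, the rotation $uv\mapsto uu^\ast$ for a degree-one vertex outside $N[u^\ast]$), use $\delta(F)\ge 2$ to guarantee the modified graph stays $F$-free, and upgrade the Rayleigh inequality $\lambda(G')\ge\lambda(G)$ to a contradiction via the eigenequation at $u^\ast$, which forces $x_u=0$. The only cosmetic blemishes are the misplaced ``a contradiction'' (you mean that $G'$ is $F$-free) and the appeal to a Perron vector of a possibly disconnected $G$, which should be phrased as taking a nonnegative top eigenvector supported on a component attaining $\lambda(G)$; neither affects the validity of the proof.
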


The following result is about the largest spectral radius of triangle-free graphs which will be used in the subsequent section.
\begin{lem}\cite{Nikiforov-2002,Nosal-1970}\label{lem:Nikiforov}
Let $G$ be a graph with $m$ edges. If $G$ is triangle-free, then $\lambda(G)\leq\sqrt{m}$. Equality holds if and only if $G$ is a complete bipartite graph.
\end{lem}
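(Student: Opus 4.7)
The plan is to follow the standard extremal/Perron-vector framework. Let $G\in\mathcal{G}(m,\theta_{2,2,3})$ attain the maximum spectral radius $\lambda=\lambda(G)$, let $\mathbf{x}$ be its Perron eigenvector, and let $u^*$ be an extremal vertex. Set $A=N(u^*)$, $B=V(G)\setminus N[u^*]$, $d=|A|$, and $A_0=\{w\in A:|N(w)\cap A|=0\}$. If $G$ is triangle-free, Lemma~\ref{lem:Nikiforov} gives $\lambda\leq\sqrt{m}<\frac{1+\sqrt{4m-3}}{2}$ for $m\geq 2$, so the desired inequality is strict; henceforth assume $G$ contains a triangle. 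Since $\theta_{2,2,3}$ is $2$-connected, Lemma~\ref{lem:Zhai-Lin-2021} ensures that $G$ is connected and $d_G(u)\geq 2$ for every $u\in B$. Moreover, from $\lambda x_{u^*}=\sum_{v\in A}x_v\leq d\,x_{u^*}$ we get $d\geq\lambda\geq\frac{1+\sqrt{4m-3}}{2}$, so the hypothesis $m\geq 57$ forces $d$ to be large.

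Next, computing $\lambda^2 x_{u^*}$ by iterating the eigenvalue equation and subtracting $\lambda x_{u^*}=\sum_{v\in A}x_v$ yields
\[
(\lambda^2-\lambda)\,x_{u^*} \;=\; d\,x_{u^*}+\sum_{w\in A}\bigl(|N(w)\cap A|-1\bigr)x_w+\sum_{w\in B}|N(w)\cap A|\,x_w.
\]
Bounding each nonnegative-coefficient term by $x_{u^*}$ and each term with negative coefficient (from vertices of $A_0$) by $0$ gives
\[
\lambda^2-\lambda \;\leq\; 2e(A)+|A_0|+e(A,B).
\]
Substituting $m=d+e(A)+e(A,B)+e(B)$, the desired inequality $\lambda^2-\lambda\leq m-1$ is equivalent to the combinatorial claim
\[
e(A)\;\leq\;(d-|A_0|)+e(B)-1. \tag{$\ast$}
\]

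The main obstacle is proving $(\ast)$ from $\theta_{2,2,3}$-freeness; this is where the bulk of the work lies. The guiding fact is: if $v\in A$ satisfies $|N(v)\cap A|\geq 2$, then $u^*$ and $v$ share at least two common neighbors in $A$, giving two internally disjoint paths of length $2$ from $u^*$ to $v$; consequently $\theta_{2,2,3}$-freeness forbids any internally disjoint path of length $3$ from $u^*$ to $v$ whose internal vertices avoid those two common neighbors. By a case analysis on the local structure around $u^*$, I would track how cycles of $G[A]$ force vertices of $B$ to attach to $A$ in restricted ways, and show that every additional cycle of $G[A]$ beyond a spanning forest must be ``paid for'' by an edge of $G[B]$ or else create a forbidden $\theta_{2,2,3}$. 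The size hypothesis $m\geq 57$ enters here, ruling out sporadic small dense configurations (for instance $G\cong K_4$) where $(\ast)$ can otherwise fail. Finally, in the equality case, tracing back each bound forces $x_w=x_{u^*}$ on the vertices where we used $x_w\leq x_{u^*}$ non-trivially, together with $|A_0|=0$, $e(B)=0$, $B=\emptyset$ and $G[A]\cong K_{1,d-1}$ whose center is a twin of $u^*$; edge-counting then pins down $G\cong K_2\vee\tfrac{m-1}{2}K_1$.
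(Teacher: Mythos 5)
Your proposal does not address the statement at all. The statement to be proved is the classical Nosal--Nikiforov bound: for a \emph{triangle-free} graph $G$ with $m$ edges, $\lambda(G)\leq\sqrt{m}$, with equality if and only if $G$ is a complete bipartite graph. This is a standalone result about arbitrary triangle-free graphs; it has nothing to do with $\theta_{2,2,3}$, with the class $\mathcal{G}(m,\theta_{2,2,3})$, or with the hypothesis $m\geq 57$. What you have written is instead a sketch of the paper's \emph{main theorem} (the extremal problem for $\theta_{2,2,3}$-free graphs), built on the usual decomposition $V(G)=\{u^*\}\cup N(u^*)\cup B$. Worse, your argument explicitly \emph{invokes} Lemma~\ref{lem:Nikiforov} in its first paragraph to dispose of the triangle-free case, so as a purported proof of that lemma it is circular.

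For the record, the paper does not prove this lemma either; it is quoted from Nosal and Nikiforov. A correct short proof runs as follows: let $\mathbf{x}$ be a unit Perron vector and $u$ a vertex maximizing $x_u$. From $\lambda^2 x_u=(A^2\mathbf{x})_u=d(u)x_u+\sum_{w\neq u}|N(u)\cap N(w)|\,x_w$ and $\sum_{w\neq u}|N(u)\cap N(w)|=\sum_{v\in N(u)}(d(v)-1)$, triangle-freeness makes $N(u)$ independent, so $\sum_{v\in N(u)}d(v)=e(N(u),V\setminus N(u))\leq m$, whence $\lambda^2 x_u\leq d(u)x_u+(m-d(u))x_u=mx_u$. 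The equality analysis (every inequality tight forces $x$ constant on each side of a bipartition covering all edges) yields the complete bipartite characterization. None of this machinery appears in your proposal, so there is a genuine and complete gap: you have proved the wrong statement.
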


\section{Proof of Theorem \ref{main theorem}}
Let $G^\ast$ be the extremal graph with the maximum spectral radius among all graphs in $\mathcal{G}(m,\theta_{2,2,3})$. For convenience, denote $\lambda=\lambda(G^\ast)$. By Lemma \ref{lem:Zhai-Lin-2021}, we know that $G^\ast$ is connected. In the view of Lemma \ref{PF}, there is the Perron vector $\mathbf{x}$. Let $u^\ast$ be the extremal vertex of $G^\ast$. Note that $K_2\vee\frac{m-1}{2}K_1$ is $\theta_{2,2,3}$-free, we have
$$
\lambda\geq\lambda\left(K_2\vee\frac{m-1}{2}K_1\right)=\frac{1+\sqrt{4m-3}}{2}.
$$
Denote $U=N_{G^\ast}(u^\ast)$ and $W=V(G^\ast)\setminus N_{G^\ast}[u^\ast]$. Let $U_0$ be the set of all isolated vertices in the induced subgraph $G^\ast[U]$, and $U_+=U\setminus U_0$ be the set of all vertices with degree at least one in $G^\ast[U]$. Let $W_H=N_W(V(H))$ for any subgraph $H$ of $G^\ast[U]$. Since $\lambda(G^\ast)\mathbf{x}=A(G^\ast)\mathbf{x}$, we have
$$
\lambda x_{u^\ast}=\sum_{u\in U}x_u=\sum_{u\in U_+}x_u+\sum_{u\in U_0}x_u.
$$
Furthermore, we also have $\lambda^2(G^\ast)\mathbf{x}=A^2(G^\ast)\mathbf{x}$. It follows that
\begin{align*}
\lambda^2x_{u^\ast}=|U|x_{u^\ast}+\sum_{u\in U_+}d_U(u)x_u+\sum_{w\in W}d_U(w)x_w.
\end{align*}
Therefore,
\begin{align*}
(\lambda^2-\lambda)x_{u^\ast}&=|U|x_{u^\ast}+\sum_{u\in U_+}(d_U(u)-1)x_u+\sum_{w\in W}d_U(w)x_w-\sum_{u\in U_0}x_u.
\end{align*}
Recall that $\lambda\geq\frac{1+\sqrt{4m-3}}{2}$. It is easy to get that $\lambda^2-\lambda\geq m-1$. Then
\begin{align*}
|U|x_{u^\ast}+\sum_{u\in U_+}(d_U(u)-1)x_u+\sum_{w\in W}d_U(w)x_w-\sum_{u\in U_0}x_u\geq(m-1)x_{u^\ast}.
\end{align*}
Since $m=|U|+e(U_+)+e(U,W)+e(W)$, we have
\begin{align*}
\sum_{u\in U_+}(d_U(u)-1)\frac{x_u}{x_{u^\ast}}+\sum_{w\in W}d_U(w)\frac{x_w}{x_{u^\ast}}\geq e(U_+)+e(U,W)+e(W)+\sum_{u\in U_0}\frac{x_u}{x_{u^\ast}}-1.\tag{1}
\end{align*}
Let $\mathcal{H}$ be the set of all non-trivial components in $G^\ast[U]$. Note that $G^\ast$ is $\theta_{2,2,3}$-free. This implies that $G^\ast[U]$ contains no double star $S_{1,2}$, which is a tree with a central edge $uv$, 1 leaves connected to $u$ and 2 leaves connected to $v$. It follows that every element $H$ in $\mathcal{H}$ is $K_{1,r}$ where $r\geq1$, $K_{1,3}+e$, $K_4-e$, $K_4$, $P_k$ where $k\geq4$ or $C_l$ where $l\geq3$.

\begin{lem}\label{pro1}
Let $H$ be a component of $G^\ast[U]$ which contains a cycle of length at least four. If $W_H\neq\emptyset$, then $d_U(w)\leq2$ for any $w\in W_H$.
\end{lem}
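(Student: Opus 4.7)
The plan is to argue by contradiction: assume some $w\in W_H$ has $d_U(w)\ge 3$, and exhibit a copy of $\theta_{2,2,3}$ in $G^\ast$, contradicting $G^\ast\in\mathcal{G}(m,\theta_{2,2,3})$. Using $w\in W_H$ together with $d_U(w)\ge 3$, I would first select three distinct neighbors $a_1,a_2,a_3$ of $w$ in $U$ with $a_1\in V(H)$. From the classification of components preceding the lemma, any $H$ containing a cycle of length at least four must be $C_\ell$ with $\ell\ge 4$, $K_4$, or $K_4-e$; in every such graph, the cycle of length at least four covers all of $V(H)$, so $a_1$ lies on a cycle $C$, and I write $a_1=v_1$.

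The main construction takes $u^\ast$ and $w$ as the two theta-endpoints. The paths $u^\ast-a_2-w$ and $u^\ast-a_3-w$ are internally disjoint paths of length $2$, and for the third path I would use $u^\ast-v-v_1-w$, where $v\in N_U(v_1)\setminus\{a_2,a_3\}$, a path of length $3$. Whenever such a $v$ exists, the four internal vertices $a_2,a_3,v,v_1$ are pairwise distinct and give a $\theta_{2,2,3}$ with endpoints $u^\ast$ and $w$. Since $|N_U(v_1)|\ge 3$ whenever $H=K_4$, or $H=K_4-e$ with $v_1$ not an endpoint of the missing edge, such a $v$ is always available in these cases.

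The main obstacle is the leftover situation $|N_U(v_1)|=2$ together with $\{a_2,a_3\}=N_U(v_1)$. For $H=C_\ell$, this forces $w$ to be adjacent to $v_1$, $v_2$, and $v_\ell$; here I would reassign the roles and use the three paths $u^\ast-v_1-w$, $u^\ast-v_\ell-w$, and $u^\ast-v_3-v_2-w$, whose internal vertices $v_1,v_\ell,v_3,v_2$ stay pairwise distinct for every $\ell\ge 4$. For $H=K_4-e$ with missing edge $v_1v_4$ and $w$ adjacent to $v_1,v_2,v_3$, the $(u^\ast,w)$-centered construction breaks down; here I would switch the theta-endpoints to the non-adjacent pair $(v_1,v_4)$ and use the paths $v_1-v_2-v_4$, $v_1-u^\ast-v_4$, and $v_1-w-v_3-v_4$. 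A short disjointness check in each subcase then produces the forbidden $\theta_{2,2,3}$, yielding the desired contradiction.
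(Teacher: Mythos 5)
Your proof is correct and follows essentially the same approach as the paper: two length-$2$ paths $u^\ast a_iw$ through $U$-neighbors of $w$, plus a length-$3$ path $u^\ast vv_1w$ through a fourth vertex of the spanning cycle of $H$, yielding a forbidden $\theta_{2,2,3}$. Your extra $K_4-e$ subcase with endpoints $v_1,v_4$ is a harmless detour (the paper avoids it by letting any of the three neighbors of $w$, not just $a_1$, serve as the midpoint of the length-$3$ path), but the argument is complete.
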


\begin{proof}
Assume that $d_U(w_0)\geq3$ for some $w_0\in W_H$. Let $C_l$ be the cycle of $H$ where $l\geq4$. We have $V(C_l)=V(H)$. Since $w_0\in W_H$, without loss of generality, suppose $w_0\in N_W(u_1)$ where $u_1\in V(C_l)$. Note that $d_U(w_0)\geq3$. Suppose $u_2,u_3\in N_U(w_0)$. Since $l\geq4$, there is at least a vertex $u_i\in\{u_1,u_2,u_3\}$ such that $u_i$ has a neighbor $u_4\in V(C_l)$ different from $\{u_1,u_2,u_3\}$. Hence $G^\ast[u^\ast,u_1,u_2,u_3,u_4,w_0]$ contains a $\theta_{2,2,3}$, which is a contradiction. We complete the proof.
\end{proof}

Let $W_0=\{w\in W|d_W(w)=0\}$. By Lemma \ref{pro1}, if $w\in W_0\cup N_W(C_l)$ where $l\geq4$, then $d(w)\leq2$.

\begin{lem}\label{no at least four cycle}
$G^\ast[U]$ contains no any cycle of length at least four.
\end{lem}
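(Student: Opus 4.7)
Suppose toward contradiction that $G^*[U]$ contains a cycle of length at least $4$. By the classification of components of $G^*[U]$ stated just before this lemma, the component $H$ containing such a cycle must be the bare cycle $C_l$ with $l \geq 4$. I plan to rule out each $l \geq 4$ either by exhibiting a $\theta_{2,2,3}$-subgraph directly, or by combining the restricted structure on $W_H$ with inequality $(1)$ to derive an upper bound on $\lambda$ that contradicts $\lambda \geq (1+\sqrt{4m-3})/2$.

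The length $l = 5$ is immediate: inside the wheel $K_1 \vee C_5$ induced on $\{u^*\} \cup V(C_5)$, the paths $u_1 u_2 u_3$, $u_1 u^* u_3$ of length $2$ together with $u_1 u_5 u_4 u_3$ of length $3$ are internally disjoint, yielding a copy of $\theta_{2,2,3}$ in $G^*$, a contradiction.

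For $l = 4$ and $l \geq 6$ the wheel $K_1 \vee C_l$ alone carries no $\theta_{2,2,3}$, so the argument must involve $W_H$. Combining Lemma \ref{pro1} (which gives $d_U(w) \leq 2$) with $\theta_{2,2,3}$-freeness, I would rule out every $w \in W_H$ with $d_U(w) = 2$ by case analysis on the location of its two $U$-neighbors. If both neighbors lie on $V(C_l)$ at an appropriate cyclic distance, routing through $u^*$, through $w$, and along a length-$3$ arc of $C_l$ produces the forbidden $\theta_{2,2,3}$ (for instance, on $C_4$ with $w$ adjacent to $u_1, u_3$ one can use $u_1 u^* u_2 u_3$ of length $3$, $u_1 u_4 u_3$ and $u_1 w u_3$ of length $2$). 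If exactly one $U$-neighbor lies on $V(C_l)$, say $u_i$, with the other being $u' \in U \setminus V(H)$, then the length-$3$ detour $u^* \to u' \to w \to u_i$ together with two length-$2$ paths through $u^*$ and consecutive vertices of $C_l$ yields the same forbidden configuration. Hence every $w \in W_H$ has $d_U(w) = 1$ with its unique $U$-neighbor on $V(C_l)$, and Lemma \ref{lem:Zhai-Lin-2021} then forces $d_W(w) \geq 1$.

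With this restricted structure in hand, I would feed it into inequality $(1)$. Writing $X_v = x_v/x_{u^*} \in (0,1]$, the contribution of $V(C_l)$ to the left-hand side of $(1)$ is $\sum_{u \in V(C_l)} X_u$ while its contribution to the right-hand side is $l$, so the slack from $C_l$ alone is $\sum_{u \in V(C_l)}(1 - X_u)$. Summing the eigen-equation $\lambda x_u = x_{u^*} + x_{u^-} + x_{u^+} + \sum_{w \in N_W(u)} x_w$ over $u \in V(C_l)$ gives
\begin{align*}
(\lambda - 2) \sum_{u \in V(C_l)} X_u = l + \sum_{w \in W_H} |N_{V(C_l)}(w)| \, X_w,
\end{align*}
and $|N_{V(C_l)}(w)| = 1$ from the previous step caps the right-hand side in terms of $|W_H|$, which is itself controlled by the non-negative slack contributions $(1 - X_w)$ that vertices of $W_H$ add to $(1)$. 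Balancing these two estimates produces an upper bound on $\lambda$ independent of $m$; this falls below $(1+\sqrt{4m-3})/2 \geq 8$ whenever $m \geq 57$, contradicting the maximality of $G^*$.

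The main obstacle I expect is the case $l = 4$: since $K_1 \vee C_4$ alone contains no $\theta_{2,2,3}$, every configuration is forced through $w$ and the case analysis on the placement of $w$'s two $U$-neighbors (adjacent on $C_4$, opposite on $C_4$, or one neighbor off $V(H)$) has to be carried out in full. A related difficulty for $l \geq 6$ is that the length-$3$ arc trick fails when $w$'s two $U$-neighbors are close together on $C_l$, so those sub-cases must be absorbed into the spectral argument rather than handled by a direct $\theta_{2,2,3}$ exhibition, and the final accounting of the $(1 - X_u)$ slack on $V(C_l)$ against the $(1 - X_w)$ slack on $W_H$ has to be done carefully enough to beat the lower bound on $\lambda$.
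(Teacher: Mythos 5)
Your overall strategy is the same as the paper's (restrict the structure of $W_H$ via Lemma \ref{pro1} and combinatorial case analysis, then feed the eigen-equations on the cycle into inequality (1)), and your observation that $l=5$ dies immediately because $K_1\vee C_5$ already contains a $\theta_{2,2,3}$ is a genuine shortcut the paper does not use. However, there are two concrete gaps. First, your opening reduction is wrong: the classification preceding the lemma allows the components $K_4$ and $K_4-e$, both of which contain a $C_4$, so a component containing a cycle of length at least four need \emph{not} be a bare cycle $C_l$. Since $K_1\vee K_4$ and $K_1\vee(K_4-e)$ have only five vertices while $\theta_{2,2,3}$ has six, these cases cannot be dismissed by exhibiting a forbidden subgraph; the paper spends roughly half of its proof of this lemma on exactly these two components, and your argument never touches them.

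Second, your structural claim that every $w\in W_H$ ends up with $d_U(w)=1$ is not established, and you in effect concede this: when $l\geq 6$ and the two $U$-neighbours of $w$ are adjacent on $C_l$ (say $u_1,u_2$), no $\theta_{2,2,3}$ is forced --- one can check that $K_1\vee C_6$ plus a vertex $w$ joined to $u_1,u_2$ is $\theta_{2,2,3}$-free --- so the subsequent step ``$|N_{V(C_l)}(w)|=1$ from the previous step'' rests on a case you admit you cannot close combinatorially. (Your other subcases are fine: cyclic distance two is killed by $u_1u_2u_3$, $u_1wu_3$, $u_1u^*u_4u_3$, and a neighbour off $V(H)$ is killed by $u^*u'wu_i$ plus two fans.) The argument therefore has to run, as the paper's does, with only the bound $d_U(w)\leq 2$ from Lemma \ref{pro1}, and the resulting accounting is genuinely more delicate than your sketch suggests: at this stage $e(W)=0$ has not yet been proved (that is the later Lemma \ref{no edge in W}), so vertices of $W_H$ with neighbours inside $W$ must be charged against the $e(W)$ term on the right of (1), which is where the paper's splitting of $W_H$ into $W_1=\bigcup_{H}W_H\cap W_0$ and its complement, and the bounds $\frac{8+4\lambda}{\lambda^2}\leq 1$ and $\frac{8}{\lambda}\leq 1$, come in. ``Balancing these two estimates produces an upper bound on $\lambda$ independent of $m$'' is a plausible headline but is not a proof of that step.
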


\begin{proof}
Suppose that $G^\ast[U]$ contains $C_l$ where $l\geq4$. Let $\mathcal{H'}$ be the family of components of $G^\ast[U]$ each of which contains cycle of length at least four as a subgraph, then $\mathcal{H}\setminus\mathcal{H'}$ is the family of other components of $G^\ast[U]$ each of which is $K_{1,r}$ where $r\geq1$, $K_{1,3}+e$, $P_k$ where $k\geq4$, $C_3$. Therefore, for each $H\in \mathcal{H}\setminus\mathcal{H'}$, we have $e(H)\leq|H|$. It is clear that
$$
\sum_{u\in V(H)}(d_H(u)-1)x_u\leq(2e(H)-|H|)x_{u^\ast}\leq e(H)x_{u^\ast}.
$$
For any $H\in\mathcal{H'}$, $H$ is $K_4-e$, $K_4$ or $C_l$ where $l\geq4$. In the following, we consider the two cases.

{\bf Case 1. }$W_H=\emptyset$.

If $H\cong C_l=u_1u_2\cdots u_lu_1$ where $l\geq4$, we have \begin{align*}
\begin{cases}
\lambda x_{u_1}= x_{u_l}+x_{u_2}+x_{u^\ast},\\
\lambda x_{u_2}= x_{u_1}+x_{u_3}+x_{u^\ast},\\
\vdots\\
\lambda x_{u_l}= x_{u_{l-1}}+x_{u_1}+x_{u^\ast}.
\end{cases}
\end{align*}
Then $\lambda(x_{u_1}+x_{u_2}+\cdots+x_{u_l})=2(x_{u_1}+x_{u_2}+\cdots+x_{u_l})+lx_{u^\ast}$.  Therefore,
$$
\sum_{u\in V(H)}(d_H(u)-1)x_u=\sum_{i=1}^lx_{u_i}=\frac{l}{\lambda-2}x_{u^\ast}.
$$
Since $m\geq57$, we have $\lambda\geq8$. Hence
$$
\sum_{u\in V(H)}(d_H(u)-1)\frac{x_u}{x_{u^\ast}}<(e(H)-1).
$$

If $H\cong K_4$ or $K_4-e$, suppose $V(H)=\{u_1,u_2,u_3,u_4\}$.
For any vertex $u_i\in\{u_1,u_2,u_3,u_4\}$, we have $d_H(u_i)\leq3$. Therefore, $\lambda x_{u_i}\leq x_{u^\ast}+3x_{u^\ast}=4x_{u^\ast}$. It follows that $x_{u_i}\leq\frac{4}{\lambda}x_{u^\ast}$ for any $i\in\{1,2,3,4\}$. Hence, according to $\lambda\geq8$, we have
\begin{align*}
\sum_{u\in V(H)}(d_H(u)-1)x_u=2\sum_{i=1}^4x_{u_i}\leq\frac{32}{\lambda}x_{u^\ast}\leq4x_{u^\ast}<(e(H)-1)x_{u^\ast}
\end{align*}
for $H\cong K_4$, and
\begin{align*}
\sum_{u\in V(H)}(d_H(u)-1)x_u<2\sum_{i=1}^4x_{u_i}\leq\frac{32}{\lambda}x_{u^\ast}\leq4x_{u^\ast}=(e(H)-1)x_{u^\ast}
\end{align*}
for $H\cong K_4-e$. So $\sum_{u\in V(H)}(d_H(u)-1)\frac{x_u}{x_{u^\ast}}<(e(H)-1)$ when $W_H=\emptyset$.

{\bf Case 2. }$W_H\neq\emptyset$.

If $H\cong C_l=u_1u_2\cdots u_lu_1$ where $l\geq4$, then
\begin{align*}
\begin{cases}
\lambda x_{u_1}= x_{u_l}+x_{u_2}+x_{u^\ast}+\sum_{w\in N_{W}(u_1)}x_w,\\
\lambda x_{u_2}= x_{u_1}+x_{u_3}+x_{u^\ast}+\sum_{w\in N_{W}(u_2)}x_w,\\
\vdots\\
\lambda x_{u_l}= x_{u_{l-1}}+x_{u_1}+x_{u^\ast}+\sum_{w\in N_{W}(u_l)}x_w,
\end{cases}
\end{align*}
we have
\begin{align*}
\lambda(x_{u_1}+x_{u_2}+\cdots+x_{u_l})=lx_{u^\ast}+2(x_{u_1}+x_{u_2}+\cdots+x_{u_l})+\sum_{i=1}^l\sum_{w\in N_{W}(u_i)}x_w.
\end{align*}
That is,
\begin{align*}
x_{u_1}+x_{u_2}+\cdots+x_{u_l}=\frac{l}{\lambda-2}x_{u^\ast}+\frac{1}{\lambda-2}\sum_{u\in V(H)}\sum_{w\in N_W(u)}x_w.
\end{align*}
Thus, by $\lambda\geq8$, we obtain
\begin{align*}
\sum_{u\in V(H)}(d_H(u)-1)\frac{x_u}{x_{u^\ast}}&=\frac{x_{u_1}+x_{u_2}+\cdots+x_{u_l}}{x_{u^\ast}}\\
&=\frac{l}{\lambda-2}+\frac{1}{\lambda-2}\sum_{u\in V(H)}\sum_{w\in N_W(u)}\frac{x_w}{x_{u^\ast}}\\
&<e(H)-1+\frac{1}{\lambda-2}\sum_{u\in V(H)}\sum_{w\in N_W(u)}\frac{x_w}{x_{u^\ast}}.
\end{align*}

If $H\cong K_4-e$, then let $V(H)=\{u_1,u_2,u_3,u_4\}$. Without loss of generality, we suppose that $d_H(u_2)=d_H(u_4)=3$. Then
\begin{align*}
\begin{cases}
\lambda x_{u_1}= x_{u_2}+x_{u_4}+x_{u^\ast}+\sum_{w\in N_{W}(u_1)}x_w,\\
\lambda x_{u_2}= x_{u_1}+x_{u_3}+x_{u_4}+x_{u^\ast}+\sum_{w\in N_{W}(u_2)}x_w,\\
\lambda x_{u_3}= x_{u_2}+x_{u_4}+x_{u^\ast}+\sum_{w\in N_{W}(u_3)}x_w,\\
\lambda x_{u_4}= x_{u_1}+x_{u_2}+x_{u_3}+x_{u^\ast}+\sum_{w\in N_{W}(u_4)}x_w,
\end{cases}
\end{align*}
we have
\begin{align*}
\lambda(x_{u_1}+x_{u_2}+x_{u_3}+x_{u_4})&=4x_{u^\ast}+2(x_{u_1}+x_{u_2}+x_{u_3}+x_{u_4})+x_{u_2}+x_{u_4}+\sum_{i=1}^4\sum_{w\in N_{W}(u_i)}x_w\\
&\leq6x_{u^\ast}+2(x_{u_1}+x_{u_2}+x_{u_3}+x_{u_4})+\sum_{i=1}^4\sum_{w\in N_{W}(u_i)}x_w.
\end{align*}
That is,
\begin{align*}
x_{u_1}+x_{u_2}+x_{u_3}+x_{u_4}\leq\frac{6}{\lambda-2}x_{u^\ast}+\frac{1}{\lambda-2}\sum_{u\in V(H)}\sum_{w\in N_W(u)}x_w.
\end{align*}
Note that $\lambda\geq8$. It follows that
\begin{align*}
\sum_{u\in V(H)}(d_H(u)-1)\frac{x_u}{x_{u^\ast}}&=\frac{x_{u_1}+2x_{u_2}+x_{u_3}+2x_{u_4}}{x_{u^\ast}}\\
&\leq2+\frac{6}{\lambda-2}+\frac{1}{\lambda-2}\sum_{u\in V(H)}\sum_{w\in N_W(u)}\frac{x_w}{x_{u^\ast}}\\
&<e(H)-1+\frac{1}{\lambda-2}\sum_{u\in V(H)}\sum_{w\in N_W(u)}\frac{x_w}{x_{u^\ast}}.
\end{align*}

If $H\cong K_4$ with $V(H)=\{u_1,u_2,u_3,u_4\}$, we have $d_H(u_i)=3$ for any $i\in\{1,2,3,4\}$. Therefore,
\begin{align*}
\begin{cases}
\lambda x_{u_1}\leq 3x_{u^\ast}+x_{u^\ast}+\sum_{w\in N_{W}(u_1)}x_w,\\
\lambda x_{u_2}\leq 3x_{u^\ast}+x_{u^\ast}+\sum_{w\in N_{W}(u_2)}x_w,\\
\lambda x_{u_3}\leq 3x_{u^\ast}+x_{u^\ast}+\sum_{w\in N_{W}(u_3)}x_w,\\
\lambda x_{u_4}\leq 3x_{u^\ast}+x_{u^\ast}+\sum_{w\in N_{W}(u_4)}x_w,
\end{cases}
\end{align*}
we have
\begin{align*}
\lambda(x_{u_1}+x_{u_2}+x_{u_3}+x_{u_4})\leq 16x_{u^\ast}+\sum_{i=1}^4\sum_{w\in N_{W}(u_i)}x_w.
\end{align*}
Thus,
\begin{align*}
\sum_{u\in V(H)}(d_H(u)-1)\frac{x_u}{x_{u^\ast}}&=\frac{2(x_{u_1}+x_{u_2}+x_{u_3}+x_{u_4})}{x_{u^\ast}}\\
&\leq\frac{32}{\lambda}+\frac{2}{\lambda}\sum_{u\in V(H)}\sum_{w\in N_W(u)}\frac{x_w}{x_{u^\ast}}.
\end{align*}
Since $\lambda\geq 8$, we obtain
\begin{align*}
\sum_{u\in V(H)}(d_H(u)-1)\frac{x_u}{x_{u^\ast}}<e(H)-1+\frac{2}{\lambda}\sum_{u\in V(H)}\sum_{w\in N_W(u)}\frac{x_w}{x_{u^\ast}}.
\end{align*}

It is easy to get $\frac{2}{\lambda}\geq\frac{1}{\lambda-2}$ for $\lambda\geq8$. Then $\sum_{u\in V(H)}(d_H(u)-1)\frac{x_u}{x_{u^\ast}}<e(H)-1+\frac{2}{\lambda}\sum_{u\in V(H)}\sum_{w\in N_W(u)}\frac{x_w}{x_{u^\ast}}$ for each $H\in \mathcal{H'}$. Thus,
\begin{align*}
\sum_{u\in U_+}(d_U(u)-1)\frac{x_u}{x_{u^\ast}}&=\sum_{H\in\mathcal{H}\setminus\mathcal{H'}}\left(\sum_{u\in V(H)}(d_H(u)-1)\frac{x_u}{x_{u^\ast}}\right)+\sum_{H\in\mathcal{H'}}\left(\sum_{u\in V(H)}(d_H(u)-1)\frac{x_u}{x_{u^\ast}}\right)\\
&<\sum_{H\in\mathcal{H}\setminus\mathcal{H'}}e(H)+\sum_{H\in\mathcal{H'}}(e(H)-1)+\sum_{H\in\mathcal{H'}}\frac{2}{\lambda}\sum_{u\in V(H)}\sum_{w\in N_W(u)}\frac{x_w}{x_{u^\ast}}\\
&\leq e(U_+)-|\mathcal{H'}|+\frac{2}{\lambda}\sum_{w\in \cup_{H\in\mathcal{H'}}W_H}d_U(w)\frac{x_w}{x_{u^\ast}}.
\end{align*}
Let $W_1=\cup_{H\in\mathcal{H'}}W_H\cap W_0$. By Lemma \ref{pro1}, we have $d_U(w)\leq2$ for any $w\in W_H$ where $H\in\mathcal{H'}$ and $d(w)\leq2$ for any $w\in W_1$. Therefore, for $w\in W_1$, we get $\lambda x_w\leq 2x_{u^\ast}$. That is, $x_w\leq\frac{2}{\lambda}x_{u^\ast}$ for any $w\in W_1$.
This implies that
\begin{align*}
&\sum_{u\in U_+}(d_U(u)-1)\frac{x_u}{x_{u^\ast}}+\sum_{w\in W}d_U(w)\frac{x_w}{x_{u^\ast}}\\
&< e(U_+)-|\mathcal{H'}|+\frac{2}{\lambda}\cdot2\sum_{w\in\bigcup_{H\in\mathcal{H'}}W_H}\frac{x_w}{x_{u^\ast}}+2\sum_{w\in W_1}\frac{x_w}{x_{u^\ast}}+\sum_{w\in W\setminus W_1}d_U(w)\frac{x_w}{x_{u^\ast}}\\
&=e(U_+)-|\mathcal{H'}|+\frac{4}{\lambda}\sum_{w\in W_1}\frac{x_w}{x_{u^\ast}}+\frac{4}{\lambda}\sum_{w\in \cup_{H\in\mathcal{H'}}W_H\setminus W_1}\frac{x_w}{x_{u^\ast}}+2\sum_{w\in W_1}\frac{x_w}{x_{u^\ast}}+\sum_{w\in W\setminus W_1}d_U(w)\frac{x_w}{x_{u^\ast}}\\
&\leq e(U_+)-|\mathcal{H'}|+\left(\frac{4}{\lambda}+2\right)\cdot\frac{2}{\lambda}e(U,W_1)+\frac{4}{\lambda}\sum_{w\in \cup_{H\in\mathcal{H'}}W_H\setminus W_1}d_W(w)\frac{x_w}{x_{u^\ast}}+e(U,W\setminus W_1)\\
&\leq e(U_+)-|\mathcal{H'}|+\frac{8+4\lambda}{\lambda^2}e(U,W_1)+\frac{8}{\lambda}e(W)+e(U,W\setminus W_1).
\end{align*}
Since $\lambda\geq8$, we have $\frac{8+4\lambda}{\lambda^2}\leq1$ and $\frac{8}{\lambda}\leq1$. Then
\begin{align*}
\sum_{u\in U_+}(d_U(u)-1)\frac{x_u}{x_{u^\ast}}+\sum_{w\in W}d_U(w)\frac{x_w}{x_{u^\ast}}
&< e(U_+)-1+e(U,W_1)+e(W)+e(U,W\setminus W_1)\\
&= e(U_+)-1+e(U,W)+e(W),
\end{align*}
which contradicts with (1). This completes the proof.
\end{proof}

By Lemma \ref{no at least four cycle}, we obtain that every non-trivial component of $G^\ast[U]$ is $K_{1,r}$ where $r\geq1$, $K_{1,3}+e$, $P_k$ where $k\geq4$ or $C_3$.

\begin{lem}\label{no edge in W}
$e(W)=0$.
\end{lem}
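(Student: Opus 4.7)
The plan is a contradiction argument paralleling Lemma~\ref{no at least four cycle}: assume $e(W)\ge 1$ and use the $\theta_{2,2,3}$-freeness at a chosen edge of $W$ together with the eigenvector equation to sharpen the left-hand side of~(1) into
\[
\sum_{u\in U_+}(d_U(u)-1)\frac{x_u}{x_{u^*}}+\sum_{w\in W}d_U(w)\frac{x_w}{x_{u^*}}<e(U_+)-1+e(U,W)+e(W).
\]
Comparing with (1) will then give $\sum_{u\in U_0}x_u/x_{u^*}<0$, a contradiction.

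The component-side bounds are obtained in the style of Lemma~\ref{no at least four cycle}. By that lemma, each non-trivial component of $G^*[U]$ is $K_{1,r}$, $K_{1,3}+e$, $P_k$ (with $k\ge 4$), or $C_3$. For $K_{1,r}$ and $P_k$, the inequality $x_u\le x_{u^*}$ at the non-leaf vertices immediately yields $\sum_{u\in V(H)}(d_H(u)-1)x_u\le(e(H)-1)x_{u^*}$ with no $W$-correction. For $C_3$ and $K_{1,3}+e$, I plan to imitate Case~2 of Lemma~\ref{no at least four cycle}: write the eigenvalue equation at each vertex of $H$, sum them, and solve for $\sum_{V(H)}x_u$; the hypothesis $\lambda\ge 8$ will force the leading coefficient to be strictly less than $e(H)-1$, while a $W$-correction of the form $(1/(\lambda-2))\sum_{w\in W_H}d_U(w)x_w/x_{u^*}$ absorbs the $W$-neighbour contribution. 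Summing over $\mathcal{H}$ yields
\[
\sum_{u\in U_+}(d_U(u)-1)\frac{x_u}{x_{u^*}}<e(U_+)-|\mathcal{H}|+\frac{2}{\lambda}\sum_{w\in\bigcup_{H\in\mathcal{H}}W_H}d_U(w)\frac{x_w}{x_{u^*}}.
\]

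The new input is a structural restriction at any edge $w_0w_0'\in E(W)$: if $d_U(w_0)\ge 2$, say with $u_1,u_2\in N_U(w_0)$, and $w_0'$ has a neighbour $a\in U\setminus\{u_1,u_2\}$, then $u^*u_1w_0$, $u^*u_2w_0$ and $u^*aw_0'w_0$ are internally disjoint paths of lengths $2,2,3$, a forbidden $\theta_{2,2,3}$. Letting $\{u_1,u_2\}$ range over all pairs in $N_U(w_0)$ forces $N_U(w_0')\subseteq N_U(w_0)$, and in particular $d_U(w_0')=0$ whenever $d_U(w_0)\ge 3$. Consequently every edge of $W$ has at least one endpoint $w$ with $d_U(w)\le 2$; for such a $w$, $\lambda x_w=\sum_{v\sim w}x_v\le(d_U(w)+d_W(w))x_{u^*}$, so when the total degree at that endpoint is also controlled one obtains $x_w\le Cx_{u^*}/\lambda$ and a saving in the $W$-sum.

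Putting the pieces together and invoking $\lambda\ge 8$ (so that the bookkeeping factors $(8+4\lambda)/\lambda^2$ and $8/\lambda$ of Lemma~\ref{no at least four cycle} are both $\le 1$) should deliver the displayed target bound and the contradiction. The degenerate case $|\mathcal{H}|=0$ must be handled separately: there $U=U_0$, the trivial estimate $\sum_{w\in W}d_U(w)x_w/x_{u^*}\le e(U,W)$ inserted in (1) directly forces $e(W)\le 1-\sum_{u\in U_0}x_u/x_{u^*}<1$, whence $e(W)=0$. The main obstacle I anticipate is turning the $\theta_{2,2,3}$ information on $d_U$ at one endpoint of a $W$-edge into a bound on its \emph{full} degree, because a $W$-vertex may a priori have many $W$-neighbours; propagating the restriction through a $W$-component and ruling out long $W$-paths via further theta configurations (for instance, $\theta_{2,2,3}$ between $u_1,u_2\in N_U(w_0)$) is where most of the case analysis will live.
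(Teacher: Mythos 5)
Your plan has a genuine gap, and you have in fact named it yourself without resolving it. The troublesome components are exactly $C_3$ and $K_{1,3}+e$, for which $\sum_{u\in V(H)}(d_H(u)-1)=e(H)$ rather than $e(H)-1$, so the extra saving of $1$ must come from the eigenvector equations; but absorbing the resulting correction term $\sum_{w\in N_W(u)}x_w$ into the $e(U,W)$ budget requires a bound of the form $x_w\leq Cx_{u^\ast}/\lambda$ for $w\in W_H$, which needs control of the \emph{full} degree $d(w)=d_U(w)+d_W(w)$. Controlling $d_W(w)$ is precisely what the conclusion $e(W)=0$ would give you, so the argument is circular as it stands (this is why the paper proves $e(W)=0$ \emph{before} eliminating $C_3$ and $K_{1,3}+e$, and only then uses $e(W)=0$ to bound $x_w$ in those later lemmas). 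Your structural observation at a $W$-edge ($N_U(w_0')\subseteq N_U(w_0)$ when $d_U(w_0)\geq2$) is correct, but you do not show how it closes this loop, and "propagating the restriction through a $W$-component" is left entirely open.

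The paper's route is much shorter and avoids all per-component refinement. Since every non-trivial component of $G^\ast[U]$ is a tree or unicyclic, $e(U_+)\leq|U_+|$, and the completely crude bounds $\sum_{u\in U_+}(d_U(u)-1)x_u\leq(2e(U_+)-|U_+|)x_{u^\ast}$ and $\sum_{w\in W}d_U(w)x_w\leq e(U,W)x_{u^\ast}$ plugged into (1) already give $e(W)\leq1$. If $e(W)=1$, equality must hold, forcing $x_w=x_{u^\ast}$ for every $w\in W$ with $d_U(w)\geq1$; in particular $x_{w_1}=x_{w_2}=x_{u^\ast}$ for the unique edge $w_1w_2$ of $W$ (both endpoints have a $U$-neighbour by Lemma~\ref{lem:Zhai-Lin-2021}). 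A single $\theta_{2,2,3}$ configuration shows $d_U(w_1)+d_U(w_2)\leq4$, and then the eigenvalue equations at $w_1,w_2$ give $2\lambda x_{u^\ast}\leq6x_{u^\ast}$, i.e.\ $\lambda\leq3$, contradicting $\lambda\geq8$. You should either adopt this equality-case analysis or supply the missing argument that bounds $d_W(w)$ for $W$-vertices adjacent to a $C_3$ or $K_{1,3}+e$ component; as written, your proof does not go through.
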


\begin{proof}
If $W=\emptyset$, then $e(W)=0$, as desired. So we consider $W\neq\emptyset$ in the following. Suppose to the contrary that $e(W)\geq1$. Since every non-trivial component of $G^\ast[U]$ is a tree or a unicyclic graph, we have $e(U_+)\leq|U_+|$. By inequality (1), we get
\begin{align*}
e(W)&\leq\sum_{u\in U_+}(d_U(u)-1)\frac{x_u}{x_{u^\ast}}+\sum_{w\in W}d_U(w)\frac{x_w}{x_{u^\ast}}-e(U_+)-e(U,W)-\sum_{u\in U_0}\frac{x_u}{x_{u^\ast}}+1\\
&\leq2e(U_+)-|U_+|+e(U,W)-e(U_+)-e(U,W)+1\\
&\leq 1.
\end{align*}
So $e(W)=1$ and $x_w=x_{u^\ast}$ for any $w\in W$ satisfying $d_U(w)\geq1$. Let $E(W)=\{w_1w_2\}$. By Lemma \ref{lem:Zhai-Lin-2021}, we know that $d(w_1)\geq2$ and $d(w_2)\geq2$. This implies that $d_U(w_1)\geq1$ and $d_U(w_2)\geq1$. It follows that $x_{w_1}=x_{w_2}=x_{u^\ast}$. Since $G^\ast$ is $\theta_{2,2,3}$-free, we have $d_U(w_1)+d_U(w_2)\leq4$. Otherwise, there is a vertex $w_i\in\{w_1,w_2\}$ such that $d_U(w_i)\geq3$. Without loss of generality, suppose $w_i=w_1$. Let $u_1,u_2,u_3\in N_U(w_1)$. Note that $d_U(w_2)\geq1$. Let $u_4\in N_U(w_2)$. Then there are at least two vertices of $\{u_1,u_2,u_3\}$ which are different from $u_4$. Assume that $u_1$ and $u_2$ are different from $u_4$. We can find that $G^\ast[u^\ast,u_1,u_2,u_4,w_1,w_2]$ contains a $\theta_{2,2,3}$, a contradiction. Therefore,
\begin{align*}
2\lambda x_{u^\ast}&=\lambda x_{w_1}+\lambda x_{w_2}\\
&=x_{w_2}+\sum_{u\in N_U(w_1)}x_u+x_{w_1}+\sum_{u\in N_U(w_2)}x_u\\
&\leq x_{u^\ast}+4x_{u^\ast}+x_{u^\ast}\\
&=6x_{u^\ast}.
\end{align*}
It yields that $\lambda\leq3$, a contradiction. This completes the proof.
\end{proof}

From now on, if $W_H=\emptyset$, we denote $\sum_{w\in N_{W}(u)}x_w=0$ for any $u\in V(H)$.
\begin{lem}\label{no K3+}
For any $H\in\mathcal{H}$, we have $H\ncong K_{1,3}+e$.
\end{lem}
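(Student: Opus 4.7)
The plan is to assume for contradiction that some component $H\in\mathcal{H}$ satisfies $H\cong K_{1,3}+e$ and to derive a violation of inequality $(1)$, in the same spirit as the proofs of Lemmas \ref{no at least four cycle} and \ref{no edge in W}. Label $V(H)=\{u_1,u_2,u_3,u_4\}$ so that $u_1$ is the degree-$3$ center, $u_2u_3$ is the triangle edge, and $u_4$ is the pendant attached to $u_1$, giving $d_H(u_1)=3$, $d_H(u_2)=d_H(u_3)=2$, $d_H(u_4)=1$, and $e(H)=4$.

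The combinatorial heart is the claim that $d_U(w)\leq 2$ for every $w\in W_H$. Suppose instead $d_U(w)\geq 3$. If $u_1\in N_U(w)$, pick any other neighbor $v\in N_U(w)\setminus\{u_1\}$; then the length-$3$ path $u^\ast v w u_1$ together with two length-$2$ paths $u^\ast u_i u_1$ for $i\in\{2,3,4\}\setminus\{v\}$ (at least two such indices always exist) form a $\theta_{2,2,3}$ in $G^\ast$ with poles $u^\ast,u_1$. If $u_1\notin N_U(w)$, fix $u_i\in N_U(w)\cap\{u_2,u_3,u_4\}$ (nonempty because $w\in W_H$) and two further neighbors $v_1,v_2\in N_U(w)\setminus\{u_i\}$; then with poles $u^\ast,w$, the length-$3$ path $u^\ast u_1 u_i w$ together with the length-$2$ paths $u^\ast v_1 w,u^\ast v_2 w$ are internally disjoint (using $u_1\notin N_U(w)$ to guarantee $v_1,v_2\neq u_1$) and again produce a $\theta_{2,2,3}$. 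Both outcomes contradict $\theta_{2,2,3}$-freeness of $G^\ast$, so $d_U(w)\leq 2$. Combined with Lemma \ref{no edge in W}, this yields $d(w)\leq 2$ and hence $x_w\leq(2/\lambda)x_{u^\ast}$ for every $w\in W_H$.

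The analytic step writes $S_i=\sum_{w\in N_W(u_i)}x_w$ and uses the eigenvalue equations at $u_1,u_2,u_3$ weighted by $d_H(u_i)-1$, i.e.\ with coefficients $2,1,1$. Collapsing $x_{u_j}\leq x_{u^\ast}$ on the right-hand side yields
\begin{align*}
\lambda(2x_{u_1}+x_{u_2}+x_{u_3})\leq 14\,x_{u^\ast}+2S_1+S_2+S_3.
\end{align*}
Since $m\geq 57$ forces $\lambda\geq 8$, the leading term $14/\lambda\leq 7/4$ is strictly smaller than $e(H)-1=3$, and $2S_1+S_2+S_3\leq 2\sum_{u\in V(H)}\sum_{w\in N_W(u)}x_w$, producing the estimate
\begin{align*}
\sum_{u\in V(H)}(d_H(u)-1)\frac{x_u}{x_{u^\ast}}<e(H)-1+\frac{2}{\lambda}\sum_{u\in V(H)}\sum_{w\in N_W(u)}\frac{x_w}{x_{u^\ast}},
\end{align*}
which matches the shape used in Lemma \ref{no at least four cycle}. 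Substituting back into $(1)$, combining with the trivial bound $\sum_{u\in V(H')}(d_{H'}(u)-1)x_u\leq (e(H')-1)x_{u^\ast}$ for tree components $H'$ and the cycle computation of Lemma \ref{no at least four cycle} for any residual $C_3$ (the same formulas go through at $l=3$), and using $e(W)=0$ together with $x_w\leq(2/\lambda)x_{u^\ast}$ on $W_H$ to dominate the $W$-correction and the $\sum_{w\in W}d_U(w)x_w$ term, I expect a strict violation of $(1)$ and hence the contradiction. The principal obstacle is the case analysis yielding $d_U(w)\leq 2$; the remaining bookkeeping follows closely the template already established in Lemma \ref{no at least four cycle}.
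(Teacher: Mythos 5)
Your proposal is correct and follows essentially the same route as the paper: the key step in both is showing $d_U(w)\leq 2$ for $w\in W_H$ via a case split on whether the center $u_1$ is a neighbour of $w$ (your first case builds the $\theta_{2,2,3}$ on poles $u^\ast,u_1$ rather than $u^\ast,w$, but this is an equally valid construction), followed by eigenvalue estimates at $u_1,u_2,u_3$ yielding $\sum_{u\in V(H)}(d_H(u)-1)x_u/x_{u^\ast}<e(H)-1+O(1/\lambda)\,e(H,W_H)$ and a contradiction with $(1)$. Your weighted-sum bound $14/\lambda\leq 7/4<3$ and the coefficient $2/\lambda+4/\lambda^2\leq 1$ check out for $\lambda\geq 8$, so the bookkeeping closes just as in the paper.
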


\begin{proof}
Suppose that there is a component $H\in\mathcal{H}$ such that $H\cong K_{1,3}+e$. Let $V(H)=\{u_1,u_2,u_3,u_4\}$ with $d_H(u_1)=3$ and $d_H(u_4)=1$. 
We first prove that $d_U(w)\leq2$ for any vertex $w\in W_H$ if $W_H\neq\emptyset$. Otherwise, there is a vertex $w_0\in W_H$ such that $d_U(w_0)\geq3$. Since $w_0\in W_H$, we can see that $w_0\in N_W(u_i)$ for some $i\in\{1,2,3,4\}$. Note that $d_U(w_0)\geq3$. Suppose $v_1,v_2\in N_U(w_0)\setminus\{u_i\}$. If $i=1$, then at least one vertex $u_j\in\{u_2,u_3,u_4\}$ is different from $v_1$ and $v_2$. Therefore, $u^\ast v_1w_0, u^\ast v_2w_0, u^\ast u_ju_iw_0$ are three internally disjoint paths of length 2,2,3 between $u^\ast$ and $w_0$, a contradiction. So $w_0\notin N_W(u_1)$. Then $u^\ast v_1w_0, u^\ast v_2w_0, u^\ast u_1u_iw_0$ are three internally disjoint paths of length 2,2,3 between $u^\ast$ and $w_0$, a contradiction. Thus, $d_U(w)\leq2$ for any vertex $w\in W_H$. By Lemmas \ref{lem:Zhai-Lin-2021} and \ref{no edge in W}, we have $d(w)=2$ for any vertex $w\in W_H$. Therefore, $\lambda x_w\leq 2x_{u^\ast}$. That is, $x_w\leq\frac{2}{\lambda}x_{u^\ast}$ for any vertex $w\in W_H$. According to $\lambda \mathbf{x}=A(G^\ast)\mathbf{x}$, we obtain
\begin{align*}
\begin{cases}
\lambda x_{u_1}= x_{u_2}+x_{u_3}+x_{u_4}+x_{u^\ast}+\sum_{w\in N_{W}(u_1)}x_w,\\
\lambda x_{u_2}= x_{u_1}+x_{u_3}+x_{u^\ast}+\sum_{w\in N_{W}(u_2)}x_w,\\
\lambda x_{u_3}= x_{u_1}+x_{u_2}+x_{u^\ast}+\sum_{w\in N_{W}(u_3)}x_w.
\end{cases}
\end{align*}
Thus,
\begin{align*}
(\lambda-2)(x_{u_1}+x_{u_2}+x_{u_3})=3x_{u^\ast}+x_{u_4}+\sum_{i=1}^3\sum_{w\in N_{W}(u_i)}x_w.
\end{align*}
By $\lambda\geq8$, we have
\begin{align*}
x_{u_1}+x_{u_2}+x_{u_3}&\leq\frac{4x_{u^\ast}}{\lambda-2}+\frac{1}{\lambda-2}\sum_{i=1}^3\sum_{w\in N_{W}(u_i)}x_w\\
&< (e(H)-2)x_{u^\ast}+\frac{1}{\lambda-2}\sum_{i=1}^3\sum_{w\in N_{W}(u_i)}x_w.
\end{align*}
Recall that $x_w\leq\frac{2}{\lambda}x_{u^\ast}$ for any vertex $w\in W_H$. We conclude that
\begin{align*}
\sum_{u\in V(H)}(d_H(u)-1)\frac{x_u}{x_{u^\ast}}&=\frac{2x_{u_1}+x_{u_2}+x_{u_3}}{x_{u^\ast}}\\
&<1+e(H)-2+\frac{1}{\lambda-2}\cdot\frac{2}{\lambda}e(H,W_H)\\
&= e(H)-1+\frac{2}{\lambda(\lambda-2)}e(H,W_H).
\end{align*}
Hence,
\begin{align*}
&\sum_{u\in U_+}(d_U(u)-1)\frac{x_u}{x_{u^\ast}}+\sum_{w\in W}d_U(w)\frac{x_w}{x_{u^\ast}}\\
&=\sum_{u\in U_+\setminus V(H)}(d_U(u)-1)\frac{x_u}{x_{u^\ast}}+\sum_{u\in V(H)}(d_U(u)-1)\frac{x_u}{x_{u^\ast}}+\sum_{w\in W_H}d_U(w)\frac{x_w}{x_{u^\ast}}+\sum_{w\in W\setminus W_H}d_U(w)\frac{x_w}{x_{u^\ast}}\\
&< e(U_+\setminus V(H))+e(H)-1+\frac{2}{\lambda(\lambda-2)}e(H,W_H)+\frac{2}{\lambda}e(U,W_H)+e(U,W\setminus W_H)\\
&\leq e(U_+)-1+\frac{2\lambda-2}{\lambda(\lambda-2)}e(U,W_H)+e(U,W\setminus W_H)\\
&\leq e(U_+)-1+e(U,W).
\end{align*}
This is a contradiction. We complete the proof.
\end{proof}

\begin{lem}\label{no C3}
For any $H\in\mathcal{H}$, we have $H\ncong C_3$.
\end{lem}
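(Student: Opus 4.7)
The plan is to argue by contradiction following the template of Lemmas \ref{no at least four cycle} and \ref{no K3+}. Assume that some $H\in\mathcal{H}$ is isomorphic to $C_3$ with $V(H)=\{u_1,u_2,u_3\}$. The key structural ingredient, which I expect to be the main obstacle, is the claim that every $w\in W_H$ satisfies $N_U(w)\subseteq V(H)$. Indeed, if $w\sim u_i$ for some $u_i\in V(H)$ and also $w\sim v$ for some $v\in U\setminus V(H)$, then the triangle supplies two length-$2$ paths $u^\ast u_j u_i$ and $u^\ast u_k u_i$ (where $\{i,j,k\}=\{1,2,3\}$), while $u^\ast v w u_i$ is a length-$3$ path with internal vertices $\{v,w\}$ disjoint from $\{u_j,u_k\}$; this produces a $\theta_{2,2,3}$ with endpoints $u^\ast$ and $u_i$, contradicting the choice of $G^\ast$. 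In particular $d_U(w)\leq 3$, and combining with Lemma \ref{no edge in W} we have $d(w)=d_U(w)\leq 3$, so $x_w\leq\frac{3}{\lambda}x_{u^\ast}$ for every $w\in W_H$.

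Next I would sum the eigenvalue equations $\lambda x_{u_i}=x_{u^\ast}+x_{u_j}+x_{u_k}+\sum_{w\in N_W(u_i)}x_w$ over $i=1,2,3$. Since the structural claim gives $d_{V(H)}(w)=d_U(w)$ on $W_H$, this yields
\[
(\lambda-2)(x_{u_1}+x_{u_2}+x_{u_3})=3x_{u^\ast}+\sum_{w\in W_H}d_U(w)x_w.
\]
Dividing by $x_{u^\ast}$ and using $x_w\leq\frac{3}{\lambda}x_{u^\ast}$ together with $\lambda\geq 8$ (which follows from $m\geq 57$), I obtain
\[
\sum_{u\in V(H)}(d_H(u)-1)\frac{x_u}{x_{u^\ast}}\leq\frac{3}{\lambda-2}+\frac{3}{\lambda(\lambda-2)}e(U,W_H),
\]
which is strictly less than $(e(H)-2)+\frac{3}{\lambda(\lambda-2)}e(U,W_H)$ because $\frac{3}{\lambda-2}<1=e(H)-2$. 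Thus each $C_3$ component saves an extra unit compared with the generic bound $\sum_{u\in V(H)}(d_H(u)-1)x_u\leq(2e(H)-|H|)x_{u^\ast}$ available for trees.

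To finish I would plug these bounds into inequality (1). By Lemmas \ref{no at least four cycle} and \ref{no K3+} the remaining components in $\mathcal{H}\setminus\mathcal{H}_{C_3}$ are either $K_{1,r}$ or $P_k$ with $k\geq 4$, for which $\sum_{u\in V(H')}(d_{H'}(u)-1)\frac{x_u}{x_{u^\ast}}\leq e(H')-1$. The structural claim also makes the sets $\{W_H\}_{H\in\mathcal{H}_{C_3}}$ pairwise disjoint, since a common element $w$ would satisfy $N_U(w)\subseteq V(H)\cap V(H')=\emptyset$, contradicting $w\in W_H$. On $W^\ast=\bigcup_{H\in\mathcal{H}_{C_3}}W_H$ one has $\sum_{w\in W^\ast}d_U(w)\frac{x_w}{x_{u^\ast}}\leq\frac{3}{\lambda}\sum_{H\in\mathcal{H}_{C_3}}e(U,W_H)$, while on $W\setminus W^\ast$ the trivial bound $x_w\leq x_{u^\ast}$ suffices. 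Collecting, the coefficient of $\sum_{H\in\mathcal{H}_{C_3}}e(U,W_H)$ in the resulting upper bound for the left-hand side of (1) works out to $\frac{3}{\lambda(\lambda-2)}-\frac{\lambda-3}{\lambda}$, which is negative for $\lambda\geq 8$ and so may be discarded. What remains is
\begin{align*}
\sum_{u\in U_+}(d_U(u)-1)\frac{x_u}{x_{u^\ast}}+\sum_{w\in W}d_U(w)\frac{x_w}{x_{u^\ast}}&<e(U_+)+e(U,W)-|\mathcal{H}_{C_3}|-|\mathcal{H}|\\
&\leq e(U_+)+e(U,W)-2,
\end{align*}
which contradicts (1) since its right-hand side is at least $e(U_+)+e(U,W)-1$ by Lemma \ref{no edge in W}. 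The bulk of the work lies in the structural step; the spectral bookkeeping afterwards is routine.
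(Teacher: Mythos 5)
Your proposal is correct and follows essentially the same route as the paper: a forbidden-subgraph argument bounds $d_U(w)$ by $3$ for $w\in W_H$, the eigenvalue equations on the triangle show its contribution to the left side of (1) falls short of $e(H)$ by more than the accumulated error terms, and (1) then gives the contradiction. The only substantive difference is that you prove the stronger structural fact $N_U(w)\subseteq V(H)$ (via a $\theta_{2,2,3}$ with endpoints $u^\ast$ and $u_i$ rather than $u^\ast$ and $w$), which lets you treat all $C_3$-components simultaneously, whereas the paper only establishes $d_U(w)\leq 3$ and argues for a single component; both versions close correctly.
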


\begin{proof}
Suppose to the contrary that there is a component $H\in\mathcal{H}$ such that $H\cong C_3$. Let $V(H)=\{u_1,u_2,u_3\}$. If $W_H\neq\emptyset$, we have $d_U(w)\leq3$ for any $w\in W_H$. Otherwise, there is a vertex $w_0\in W_H$ satisfying $d_U(w_0)\geq4$. Without loss of generality, assume $w_0\in N_W(u_1)$. Note that $|V(H)|=3$. Suppose that $v\in N_U(w_0)$ is different from $u_1,u_2,u_3$. Since $d_U(w_0)\geq4$, there is another vertex $v'\in N_U(w_0)\setminus\{u_1,v\}$. Then at least one of $u_2$ and $u_3$ is different from $v'$. Suppose that $u_2$ is different from $v'$. It is easy to find that $u^\ast vw_0, u^\ast v'w_0, u^\ast u_2u_1w_0$ are three internally disjoint paths of length 2,2,3, a contradiction. By Lemma \ref{no edge in W}, we obtain $\lambda x_w\leq 3x_{u^\ast}$ for any vertex $w\in W_H$.
Since
\begin{align*}
\begin{cases}
\lambda x_{u_1}= x_{u_2}+x_{u_3}+x_{u^\ast}+\sum_{w\in N_{W}(u_1)}x_w,\\
\lambda x_{u_2}= x_{u_1}+x_{u_3}+x_{u^\ast}+\sum_{w\in N_{W}(u_2)}x_w,\\
\lambda x_{u_3}= x_{u_1}+x_{u_2}+x_{u^\ast}+\sum_{w\in N_{W}(u_3)}x_w,
\end{cases}
\end{align*}
we get
\begin{align*}
(\lambda-2)(x_{u_1}+x_{u_2}+x_{u_3})=3x_{u^\ast}+\sum_{i=1}^3\sum_{w\in N_{W}(u_i)}x_w.
\end{align*}
Recall that $\lambda\geq8$. It follows that
\begin{align*}
x_{u_1}+x_{u_2}+x_{u_3}&=\frac{3x_{u^\ast}}{\lambda-2}+\frac{1}{\lambda-2}\sum_{i=1}^3\sum_{w\in N_{W}(u_i)}x_w\\
&< (e(H)-1)x_{u^\ast}+\frac{1}{\lambda-2}\sum_{i=1}^3\sum_{w\in N_{W}(u_i)}x_w.
\end{align*}
Since $x_w\leq\frac{3}{\lambda}x_{u^\ast}$ for any vertex $w\in W_H$, we obtain
\begin{align*}
\sum_{u\in V(H)}(d_H(u)-1)\frac{x_u}{x_{u^\ast}}&=\frac{x_{u_1}+x_{u_2}+x_{u_3}}{x_{u^\ast}}\\
&< e(H)-1+\frac{1}{\lambda-2}\cdot\frac{3}{\lambda}e(H,W_H)\\
&=e(H)-1+\frac{3}{\lambda(\lambda-2)}e(H,W_H).
\end{align*}
Hence,
\begin{align*}
&\sum_{u\in U_+}(d_U(u)-1)\frac{x_u}{x_{u^\ast}}+\sum_{w\in W}d_U(w)\frac{x_w}{x_{u^\ast}}\\
&=\sum_{u\in U_+\setminus V(H)}(d_U(u)-1)\frac{x_u}{x_{u^\ast}}+\sum_{u\in V(H)}(d_U(u)-1)\frac{x_u}{x_{u^\ast}}+\sum_{w\in W_H}d_U(w)\frac{x_w}{x_{u^\ast}}+\sum_{w\in W\setminus W_H}d_U(w)\frac{x_w}{x_{u^\ast}}\\
&< e(U_+\setminus V(H))+e(H)-1+\frac{3}{\lambda(\lambda-2)}e(H,W_H)+\frac{3}{\lambda}e(U,W_H)+e(U,W\setminus W_H)\\
&\leq e(U_+)-1+\frac{3\lambda-3}{\lambda(\lambda-2)}e(U,W_H)+e(U,W\setminus W_H)\\
&\leq e(U_+)-1+e(U,W).
\end{align*}
This is a contradiction. We complete the proof.
\end{proof}

\begin{lem}\label{no Pk}
For any $H\in\mathcal{H}$, we have $H\ncong P_k$ where $k\geq4$.
\end{lem}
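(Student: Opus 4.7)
The plan is to mirror the strategy used in Lemmas \ref{no K3+} and \ref{no C3}: assume for contradiction that some component $H\in\mathcal{H}$ is isomorphic to $P_k=u_1u_2\cdots u_k$ with $k\geq 4$, first prove $d_U(w)\leq 2$ for every $w\in W_H$, and then use the eigenvector equation along $V(H)$ to contradict inequality $(1)$.

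For the first step, suppose for contradiction that $d_U(w_0)\geq 3$ for some $w_0\in W_H$. I would produce a $\theta_{2,2,3}$ between $u^\ast$ and $w_0$ by the following dichotomy. \textbf{Case A:} some $v\in N_U(w_0)\cap V(H)$ has a $U$-neighbor $\gamma\notin N_U(w_0)$. Choosing two further neighbors $\alpha,\beta\in N_U(w_0)\setminus\{v\}$ (available since $d_U(w_0)\geq 3$), the paths $u^\ast\alpha w_0$, $u^\ast\beta w_0$, and $u^\ast\gamma v w_0$ are internally disjoint of lengths $2,2,3$. \textbf{Case B:} every $v\in N_U(w_0)\cap V(H)$ satisfies $N_U(v)\subseteq N_U(w_0)$. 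Since $H$ is a component of $G^\ast[U]$, $N_U(v)$ coincides with the set of $H$-neighbors of $v$; propagating this inclusion along the connected path $P_k$ from the nonempty set $N_U(w_0)\cap V(H)$ forces $V(H)\subseteq N_U(w_0)$, hence $u_1,u_2,u_3,u_4\in N_U(w_0)$, and the paths $u^\ast u_3 w_0$, $u^\ast u_4 w_0$, $u^\ast u_1 u_2 w_0$ form the required $\theta_{2,2,3}$. Both cases contradict $\theta_{2,2,3}$-freeness, so $d_U(w)\leq 2$ for every $w\in W_H$; combined with Lemmas \ref{lem:Zhai-Lin-2021} and \ref{no edge in W} this gives $x_w\leq\frac{2}{\lambda}x_{u^\ast}$ for every such $w$.

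For the second step, summing $\lambda x_{u_j}=x_{u_{j-1}}+x_{u_{j+1}}+x_{u^\ast}+\sum_{w\in N_W(u_j)}x_w$ over $j=1,\ldots,k$ and using $\sum_j d_H(u_j)x_{u_j}=2\sum_j x_{u_j}-x_{u_1}-x_{u_k}$ for $P_k$ gives
$$(\lambda-2)\sum_{u\in V(H)}x_u=kx_{u^\ast}-x_{u_1}-x_{u_k}+\sum_{u\in V(H)}\sum_{w\in N_W(u)}x_w.$$
Combined with $\sum_u(d_H(u)-1)x_u=\sum_u x_u-x_{u_1}-x_{u_k}\leq\sum_u x_u$ and the bound $\frac{k}{\lambda-2}\leq\frac{k}{6}<k-2=e(H)-1$ for $k\geq 4$ (valid since $\lambda\geq 8$), this yields
$$\sum_{u\in V(H)}(d_H(u)-1)\frac{x_u}{x_{u^\ast}}<e(H)-1+\frac{2}{\lambda(\lambda-2)}e(H,W_H).$$

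For the third step, every other component $H'\in\mathcal{H}\setminus\{H\}$ is a tree by Lemmas \ref{no at least four cycle}, \ref{no K3+}, \ref{no C3}, so $e(H')\leq|V(H')|$ gives $\sum_{u\in V(H')}(d_{H'}(u)-1)\frac{x_u}{x_{u^\ast}}\leq e(H')$, and therefore $\sum_{u\in U_+\setminus V(H)}(d_U(u)-1)\frac{x_u}{x_{u^\ast}}\leq e(U_+)-e(H)$. Splitting $W=W_H\cup(W\setminus W_H)$, applying $x_w\leq\frac{2}{\lambda}x_{u^\ast}$ on $W_H$, and using $\frac{2(\lambda-1)}{\lambda(\lambda-2)}\leq 1$ for $\lambda\geq 8$, I would conclude
$$\sum_{u\in U_+}(d_U(u)-1)\frac{x_u}{x_{u^\ast}}+\sum_{w\in W}d_U(w)\frac{x_w}{x_{u^\ast}}<e(U_+)-1+e(U,W),$$
contradicting inequality $(1)$ since $e(W)=0$. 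The main obstacle is the Case B analysis of Step 1, where one must use the connectivity of $P_k$ to turn the local constraint $N_U(v)\subseteq N_U(w_0)$ into the global inclusion $V(H)\subseteq N_U(w_0)$, so that four consecutive path vertices are available as internal vertices of the $\theta_{2,2,3}$; the analytic work in Steps 2 and 3 follows the template of Lemmas \ref{no K3+} and \ref{no C3}, with the only new ingredient being the uniform inequality $\frac{k}{\lambda-2}<e(H)-1$ for all $k\geq 4$ when $\lambda\geq 8$.
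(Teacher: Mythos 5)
Your proposal is correct and follows essentially the same strategy as the paper's proof: first show $d_U(w)\leq 2$ for all $w\in W_H$ via a $\theta_{2,2,3}$ construction, then run the eigenvector equations along the path and feed the result into inequality $(1)$. The only differences are cosmetic — your Case A/Case B dichotomy makes rigorous the paper's terse claim that a suitable vertex $u_i$ with a neighbor in $N_U(w_0)$ exists, and you sum the eigenvalue equations over all $k$ path vertices rather than only the internal ones, which changes the denominators from $\lambda-1$ to $\lambda-2$ but still yields the required contradiction since $\frac{2(\lambda-1)}{\lambda(\lambda-2)}\leq 1$ for $\lambda\geq 8$.
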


\begin{proof}
Let $P_k=u_1u_2\cdots u_k$ where $k\geq4$. If $W_H\neq\emptyset$, we show that $d_U(w)\leq2$ for any $w\in W_H$. Suppose that there exists a vertex $w_0\in W_H$ satisfying $d_U(w_0)\geq3$. Assume $v_1,v_2,v_3\in N_U(w_0)$. Since $k\geq4$, there is a vertex $u_i\in V(P_k)$ such that $u_{i-1}\in N_U(w_0)$ or $u_{i+1}\in N_U(w_0)$ and $u_i\notin\{v_1,v_2,v_3\}$. Without loss of generality, suppose $u_{i-1}\in N_U(w_0)$. Then at least two vertices of $v_1,v_2,v_3$ are different from $u_{i-1}$. Suppose the two vertices are $v_1,v_2$. It follows that $u^\ast v_1w_0, u^\ast v_2w_0, u^\ast u_iu_{i-1}w_0$ are three internally disjoint paths of length 2,2,3, a contradiction. Therefore, $d_U(w)\leq2$ for any $w\in W_H$. By Lemma \ref{no edge in W}, we have $d(w)\leq2$ for any $w\in W_H$. This implies that $\lambda x_w\leq2x_{u^\ast}$ for any $w\in W_H$. By
\begin{align*}
\begin{cases}
\lambda x_{u_2}= x_{u_1}+x_{u_3}+x_{u^\ast}+\sum_{w\in N_{W}(u_2)}x_w,\\
\lambda x_{u_3}= x_{u_2}+x_{u_4}+x_{u^\ast}+\sum_{w\in N_{W}(u_3)}x_w,\\
\vdots\\
\lambda x_{u_{k-1}}= x_{u_{k-2}}+x_{u_k}+x_{u^\ast}+\sum_{w\in N_{W}(u_{k-1})}x_w,
\end{cases}
\end{align*}
we have
\begin{align*}
&\lambda(x_{u_2}+x_{u_3}+\cdots+x_{u_{k-1}})\\
&=(k-2)x_{u^\ast}+x_{u_1}+x_{u_2}+2(x_{u_3}+\cdots+x_{u_{k-2}})+x_{u_{k-1}}+x_{u_k}+\sum_{i=2}^{k-1}\sum_{w\in N_{W}(u_i)}x_w\\
&\leq2(k-2)x_{u^\ast}+x_{u_2}+x_{u_3}+\cdots+x_{u_{k-1}}+\sum_{i=2}^{k-1}\sum_{w\in N_{W}(u_i)}x_w.
\end{align*}
Note that $\lambda x_w\leq2x_{u^\ast}$ for any $w\in W_H$. We obtain
\begin{align*}
x_{u_2}+x_{u_3}+\cdots+x_{u_{k-1}}&\leq\frac{2(k-2)}{\lambda-1}x_{u^\ast}+\frac{1}{\lambda-1}\cdot\frac{2}{\lambda}e(H,W_H)x_{u^\ast}\\
&=\frac{2(k-2)}{\lambda-1}x_{u^\ast}+\frac{2}{\lambda(\lambda-1)}e(H,W_H)x_{u^\ast}.
\end{align*}
By $\lambda\geq8$, we have
\begin{align*}
\sum_{u\in V(H)}(d_H(u)-1)\frac{x_u}{x_{u^\ast}}&=\frac{x_{u_2}+x_{u_3}+\cdots+x_{u_{k-1}}}{x_{u^\ast}}\\
&\leq\frac{2(k-2)}{\lambda-1}+\frac{2}{\lambda(\lambda-1)}e(H,W_H)\\
&<e(H)-1+\frac{2}{\lambda(\lambda-1)}e(H,W_H).
\end{align*}
Therefore,
\begin{align*}
&\sum_{u\in U_+}(d_U(u)-1)\frac{x_u}{x_{u^\ast}}+\sum_{w\in W}d_U(w)\frac{x_w}{x_{u^\ast}}\\
&=\sum_{u\in U_+\setminus V(H)}(d_U(u)-1)\frac{x_u}{x_{u^\ast}}+\sum_{u\in V(H)}(d_U(u)-1)\frac{x_u}{x_{u^\ast}}+\sum_{w\in W_H}d_U(w)\frac{x_w}{x_{u^\ast}}+\sum_{w\in W\setminus W_H}d_U(w)\frac{x_w}{x_{u^\ast}}\\
&< e(U_+\setminus V(H))+e(H)-1+\frac{2}{\lambda(\lambda-1)}e(H,W_H)+\frac{2}{\lambda}e(U,W_H)+e(U,W\setminus W_H)\\
&\leq e(U_+)-1+\frac{2}{\lambda-1}e(U,W_H)+e(U,W\setminus W_H)\\
&\leq e(U_+)-1+e(U,W),
\end{align*}
a contradiction. This completes the proof.
\end{proof}

According to Lemmas \ref{no at least four cycle}, \ref{no K3+}, \ref{no C3}, \ref{no Pk}, we get that every element $H$ in $\mathcal{H}$ is $K_{1,r}$ where $r\geq1$. Then $e(H)=|H|-1$. This implies that
\begin{align*}
\sum_{u\in U_+}(d_U(u)-1)\frac{x_u}{x_{u^\ast}}
&\leq \sum_{H\in\mathcal{H}}(2e(H)-|H|)\\
&= \sum_{H\in\mathcal{H}}(e(H)-1)\\
&= e(U_+)-|\mathcal{H}|.
\end{align*}
By (1) and $\sum_{w\in W}d_U(w)\frac{x_w}{x_{u^\ast}}\leq e(U,W)$, we have
\begin{align*}
\sum_{u\in U_+}(d_U(u)-1)\frac{x_u}{x_{u^\ast}}\geq e(U_+)+\sum_{u\in U_0}\frac{x_u}{x_{u^\ast}}-1.
\end{align*}
Combining with the two inequalities, we have
$|\mathcal{H}|+\sum_{u\in U_0}\frac{x_u}{x_{u^\ast}}\leq1$.
Next we finish the proof of Theorem \ref{main theorem}.

{\bf Proof of Theorem \ref{main theorem}.} If $|\mathcal{H}|=0$, then $G^\ast$ is bipartite. By Lemma \ref{lem:Nikiforov}, we have $\lambda\leq\sqrt{m}<\frac{1+\sqrt{4m-3}}{2}$. This contradicts with $\lambda\geq\frac{1+\sqrt{4m-3}}{2}$. So $|\mathcal{H}|=1$. It follows that $U_0=\emptyset$ due to $x_u>0$ for any $u\in V(G^\ast)$ and $\sum_{u\in U_+}(d_U(u)-1)\frac{x_u}{x_{u^\ast}}= e(U_+)-1$. That is, $G^\ast[U]\cong K_{1,r}$. If $r=1$, then $G^\ast[U]$ contains an edge $u_0u_1$. We have $\lambda x_{u^\ast}=x_{u_0}+x_{u_1}$ and $\lambda x_{u_0}=x_{u^\ast}+x_{u_1}+\sum_{w\in N_w(u_0)}x_w$. It yields that $\sum_{w\in N_w(u_0)}x_w=(\lambda+1)(x_{u_0}-x_{u^\ast})\leq0$. Since $\sum_{w\in N_w(u_0)}x_w\geq0$, we obtain $\sum_{w\in N_w(u_0)}x_w=0$. That is, $N_w(u_0)=\emptyset$. By Lemmas \ref{lem:Zhai-Lin-2021} and \ref{no edge in W}, we have $W=\emptyset$. Then $m=3$, a contradiction. So $r\geq2$. Let $U=\{u_0,u_1,\ldots,u_r\}$ with the center $u_0$. Because $d_U(u_0)\geq2$, we have $x_{u_0}=x_{u^\ast}$.
Since
\begin{align*}
\lambda x_{u^\ast}=x_{u_0}+x_{u_1}+\cdots+x_{u_r}
\end{align*}
and
\begin{align*}
\lambda x_{u_0}=x_{u^\ast}+x_{u_1}+\cdots+x_{u_r}+\sum_{w\in N_w(u_0)}x_w,
\end{align*}
we get $\sum_{w\in N_w(u_0)}x_w=0$. Thus, $N_w(u_0)=\emptyset$. If $r=2$, then we have $N(w)=\{u_1,u_2\}$ for any $w\in W$ by Lemma \ref{lem:Zhai-Lin-2021}. It follows that $|W|\leq1$. Otherwise there is a $\theta_{2,2,3}$, contradiction. Therefore, $m=e(G^\ast)=7$, a contradiction. This implies that $r\geq3$. If $W\neq\emptyset$, we have $d(w)\leq2$ for any $w\in W$. Otherwise, suppose that $u_1,u_2,u_3$ are three neighbors of $w_0\in W$. Then $u^\ast u_1w_0, u^\ast u_2w_0, u^\ast u_0u_3w_0$ are three internally disjoint paths of length 2,2,3, a contradiction. Therefore, $\lambda x_w\leq2x_{u^\ast}$ for any $w\in W$. It follows that
\begin{align*}
&\sum_{u\in U_+}(d_U(u)-1)\frac{x_u}{x_{u^\ast}}+\sum_{w\in W}d_U(w)\frac{x_w}{x_{u^\ast}}\\
&\leq(e(U_+)-1)+\frac{2}{\lambda}e(U,W)\\
&< e(U_+)-1+e(U,W),
\end{align*}
a contradiction. Hence, $W=\emptyset$. This implies that $G^\ast\cong K_1\vee K_{1,r}$ with $2r+1=m$. That is, $G^\ast\cong K_2\vee \frac{m-1}{2}K_1$. We complete the proof.\hfill$\Box$
\section*{\bf Acknowledgments}

This work was supported by National Natural Science Foundation of China (Nos.12131013 and 12161141006).\\

\end{document}